\theoremstyle{plain}
\newtheorem{thm}{Theorem}[section]
\newtheorem{cor}[thm]{Corollary}
\newtheorem{lem}[thm]{Lemma}
\newtheorem{conj}[thm]{Conjecture}
\theoremstyle{definition}
\newtheorem{defn}[thm]{Definition}
\newtheorem{exam}[thm]{Example}
\newtheorem{rem}[thm]{Remark}
\theoremstyle{remark}
\numberwithin{equation}{section}
\newproof{pf}{Proof}
\begin{document}

\begin{frontmatter}

\title{Spectral Radius of Bipartite Graphs}

\author[CL]{Chia-an Liu\corref{cor}}\ead{twister.imm96g@g2.nctu.edu.tw}
\author[CW]{Chih-wen Weng}\ead{weng@math.nctu.edu.tw}

\date{February 23, 2014}
\cortext[cor]{Corresponding author}

\address[CL]{Department of Applied Mathematics, National Chiao-Tung University, Hsinchu, Taiwan}
\address[CW]{Department of Applied Mathematics, National Chiao-Tung University, Hsinchu, Taiwan}

\begin{abstract} Let $k, p, q$  be positive integers with $k<p<q+1$.
  We prove that the maximum spectral radius of a simple bipartite graph obtained
 from the complete bipartite graph $K_{p, q}$ of bipartition orders $p$ and $q$ by deleting $k$ edges is attained when the deleting edges are all incident on a common vertex which is located in the partite set of order $q$.
Our method is based on new sharp upper bounds on the spectral radius of bipartite graphs in terms of their degree sequences.
\end{abstract}

\begin{keyword}
Bipartite graph\sep adjacency matrix\sep spectral radius\sep degree sequence

\MSC[2010] 05C50\sep 15A18
\end{keyword}
\end{frontmatter}


\section{Introduction}   \label{s1}

Let $G$ be a simple graph of order $n.$
The {\it adjacency matrix} $A=(a_{ij})$ of $G$ is a binary square matrix of order $n$
with rows and columns indexed by the vertex set $VG$ of $G$ such that
for any $i, j\in VG$, $a_{ij}=1$ iff $i,j$ are adjacent in $G.$
The \emph{spectral radius} $\rho(G)$ of $G$ is the largest eigenvalue of the adjacency matrix $A$ of $G.$

\medskip

Braualdi and Hoffman proposed the problem of finding
the maximum spectral radius of a graph with precisely $e$ edges in $1976$
\cite[p.438]{bfvs:76}, and ten years later
they gave a conjecture in \cite{ra:85} that the maximum spectral radius of a graph with
$e$ edges is attained by taking a complete graph and adding a new vertex which is
adjacent to a corresponding number of vertices in the complete graph.
This conjecture was proved by Peter Rowlinson in \cite{p:88}.
See \cite{r:87, s:88} also for the proof of partial cases of this conjecture.
\medskip

The next problem is then to determine graphs with maximum spectral radius in the class of connected graphs with $n$ vertices and $e$ edges. The cases $e\leq n+5$ are settled by
Brualdi and Solheid \cite{bs:86}, and the cases $e-n={r\choose 2}-1$ by F. K. Bell \cite{b:91}.

\medskip

The bipartite graphs analogue of the Brualdi-Hoffman conjecture was settled by
A. Bhattacharya, S. Friedland, U.N. Peled \cite{bf:08} with the following statement: For a connected bipartie graph $G$, $\rho(G)\leq \sqrt{e}$ with equality iff $G$ is a complete bipartite graph. Moreover, they
proposed the problem
to determine graphs with maximum spectral radius in the class of bipartite graphs with bipartition orders $p$ and $q$, and $e$ edges. They then gave Conjecture \ref{conj1} below.
\medskip

From now on the graphs considered are simple bipartite.
Let $\mathcal{K}(p,q,e)$ denote the family of $e$-edge subgraphs of the complete bipartite graph $K_{p,q}$ with bipartition orders $p$ and $q$.

\begin{conj}  \label{conj1}
Let $1 < e < pq$ be integers. An extremal graph that solves
$$ \max_{G \in \mathcal{K}(p,q,e)} \rho(G) $$
is obtained from a complete bipartite graph by adding one vertex and
a corresponding number of edges.
\end{conj}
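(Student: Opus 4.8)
\textit{Proof proposal.} The plan is to establish the conjecture in the range asserted in the abstract — when the number $k:=pq-e$ of missing edges satisfies $k<p<q+1$ — by reducing it to an optimisation over degree sequences. Let $G^{*}\in\mathcal K(p,q,e)$ denote the candidate extremum: $K_{p,q}$ with $k$ edges deleted, all incident to one vertex $v$ of the $q$-element part. Two preliminary remarks. Since $K_{p,q}$ has edge-connectivity $p$, every graph in $\mathcal K(p,q,e)$ is connected once $k<p$, so no disconnected cases intrude. And $G^{*}$ is a \emph{chain graph} (on each side the neighbourhoods are totally ordered by inclusion); from its natural equitable partition one computes that $\rho(G^{*})^{2}$ is the larger root of $\mu^{2}-(pq-k)\mu+(q-1)k(p-k)=0$.

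The engine is a new upper bound $\rho(G)^{2}\le B(\mathbf d,\mathbf e)$, valid for an arbitrary bipartite graph $G$ with degree sequence $\mathbf d$ on the $p$-side and $\mathbf e=(e_{1}\ge\cdots\ge e_{q})$ on the $q$-side, that is sharp on chain graphs. I would prove it from the Perron eigenvector $(\boldsymbol\phi,\boldsymbol\psi)$: fix a vertex $y_{1}$ of maximum Perron weight in the $q$-element part and expand
\[
  \rho^{2}\psi_{y_{1}}=\sum_{x\sim y_{1}}\sum_{y\sim x}\psi_{y}
   =d(y_{1})\,\psi_{y_{1}}+\sum_{y\neq y_{1}}|N(y)\cap N(y_{1})|\,\psi_{y},
\]
then bound $|N(y)\cap N(y_{1})|\le\min(d(y),d(y_{1}))$ and replace the remaining $\psi_{y}$ by the appropriate partial sums of the sorted vector $\boldsymbol\psi$. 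This delivers a quadratic inequality $\mu^{2}-e\mu+C(\mathbf d,\mathbf e)\le 0$ in $\mu=\rho^{2}$ for an explicit nonnegative ``defect'' $C$, whose controlling root is $B(\mathbf d,\mathbf e)$; tracking the equality cases shows the extremal graphs are chain graphs. In particular $C(\mathbf d^{*},\mathbf e^{*})=(q-1)k(p-k)$ and $\rho(G^{*})^{2}=B(\mathbf d^{*},\mathbf e^{*})$, consistent with the quadratic above.

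Because $e=pq-k$ is fixed, it then suffices to show $C(\mathbf d,\mathbf e)\ge C(\mathbf d^{*},\mathbf e^{*})$ for every $G\in\mathcal K(p,q,e)$ — equivalently that $B$ is maximised over feasible degree sequences at that of $G^{*}$. Writing $G=K_{p,q}-H$ with $H$ bipartite, $k$ edges, and $Y$-degree sequence $(h_{1}\ge\cdots\ge h_{q})$ (so $\mathbf e=(p-h_{j})_{j}$ and $\sum h_{j}=k$), I would argue by an exchange/majorisation step: move edges of $H$ so as to pile them onto a single $Y$-vertex, checking $C$ does not increase at each move, with the Gale--Ryser conditions guaranteeing that each intermediate sequence is realisable. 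The hypotheses enter exactly here: $H$ touches at most $k<p\le q$ vertices of $Y$, so $C$ is controlled by quantities like $\bigl(\text{number of $Y$-vertices untouched by }H\bigr)\cdot k\cdot(p-\max_{j}h_{j})$, which over the feasible range is minimised by taking $H$ a star; and comparing a star based in the $q$-side with one based in the $p$-side, the difference of defects has the sign of $(k-1)(p-q)\le 0$, which is precisely why the star must lie in the part of order $q$. Chaining the inequalities gives $\rho(G)^{2}\le B(\mathbf d,\mathbf e)\le B(\mathbf d^{*},\mathbf e^{*})=\rho(G^{*})^{2}$, with equality forcing $G\cong G^{*}$ (when $p=q$ the two stars are themselves isomorphic, leaving the conclusion intact).

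I expect two steps to be the real work. First, designing $B$ to be at once \emph{provable in general} and \emph{tight on chain graphs}: the off-diagonal entries of $BB^{T}$ (the common-neighbour counts) are not functions of the degree sequences alone, so the partial-sum estimates must carry just enough slack to survive for arbitrary $G$ while collapsing to equalities on $G^{*}$ — identifying the correct defect $C$ is delicate. Second, the optimisation of $C$: since $C$ mixes products with order statistics of $\mathbf e$, one must check its monotonicity along the \emph{whole} chain of exchange moves rather than merely comparing the endpoints, and dispose of the boundary configurations, in order to conclude that the degree sequence of $G^{*}$ is the unique minimiser.
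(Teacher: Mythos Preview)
Your high-level plan---prove a sharp degree-sequence upper bound for $\rho(G)^{2}$, then optimise it over admissible degree data---is the paper's plan, and you correctly restrict to the range $k<p\le q$ (the paper does not prove Conjecture~\ref{conj1} in full; it proves Conjecture~\ref{conj2}). But the two technical steps are executed quite differently, and in your version the first has a real gap.

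\emph{The upper bound.} The paper does \emph{not} obtain its bound from a single-vertex Perron expansion. It conjugates $A^{2}$ by an explicit diagonal matrix $U=\mathrm{diag}(x_{1},\dots,x_{s-1},1,\dots,1,x'_{1},\dots,x'_{t-1},1,\dots,1)$, where the weights $x_{k},x'_{\ell}$ are themselves chosen as functions of the target value $\phi_{s,t}$ (equations~\eqref{eq2.5}--\eqref{eq2.6}), and then shows \emph{every} row sum of $U^{-1}A^{2}U$ is at most $\phi_{s,t}^{2}$. With $(s,t)=(p,q)$ this yields a bound depending only on $e,p,q$ and the two minimum degrees $d_{p},d'_{q}$; the governing quadratic is $\mu^{2}-X_{p,q}\mu+Y_{p,q}=0$ with $X_{p,q}=2e-(pd_{p}+qd'_{q}-d_{p}d'_{q})$, which equals $e$ only on the extremal configuration, not for general $G$. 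Your route---expand $\rho^{2}\psi_{y_{1}}$, bound $|N(y)\cap N(y_{1})|\le\min(d(y),d(y_{1}))$, then ``replace the remaining $\psi_{y}$ by appropriate partial sums''---does not, as written, produce a quadratic in $\mu=\rho^{2}$ at all: bounding every $\psi_{y}$ by $\psi_{y_{1}}$ collapses to $\rho^{2}\le e$, and you have not said what further relation among the $\psi_{y}$ yields the second equation needed to close the loop and extract the defect $C$. This is exactly where the paper's diagonal-scaling trick does work that a one-vertex argument cannot.

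\emph{The optimisation.} Because the paper's bound is a two-variable function $\phi_{p,q}(d_{p},d'_{q})$, the optimisation is a short calculus exercise: Lemma~\ref{lem_partial} gives $\partial\phi_{p,q}/\partial d_{p}<0$, so one slides $d_{p}$ down to the line $d_{p}+d'_{q}=e-(p-1)(q-1)$, and Lemma~\ref{lem_conj2} compares values along that line via $d_{p}d'_{q}\ge(q-1)(p-k)$ and the concavity of $\sqrt{\,\cdot\,}$. Your plan keeps full degree sequences and proposes exchange/majorisation moves on $H=K_{p,q}-G$; even if your bound $B$ were in hand, this is a heavier combinatorial task, and the assertion that ``$C$ is controlled by $(\text{untouched }Y\text{-vertices})\cdot k\cdot(p-\max_{j}h_{j})$'' is stated without derivation. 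The paper's reduction to two scalar parameters is what makes the endgame clean.
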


\medskip

Conjecture~\ref{conj1} does not indicate that the adding vertex goes into which partite set of a complete bipartite graph. Let $K_{p, q}^{[e]}$ (resp. $K_{p, q}^{\{e\}}$) denote the graph which is obtained
from $K_{p, q}$ by deleting $pq-e$ edges incident on a common vertex in the partite set
of order no larger than (resp. no less than) that of the other partite set.
Figure 1 gives two such graphs.
\bigskip

\begin{center}
\begin{multicols}{2}
\begin{picture}(50,80)
\put(10,40){\circle*{3}} \put(10,60){\circle*{3}}
\put(40,30){\circle*{3}} \put(40,50){\circle*{3}}
\put(40,70){\circle*{3}}
\qbezier(10,60)(25,65)(40,70) \qbezier(10,60)(25,55)(40,50)
\qbezier(10,60)(25,45)(40,30) \qbezier(10,40)(25,55)(40,70)
\qbezier(10,40)(25,45)(40,50)
\put(-5, -10){$K_{2, 3}^{\{5\}} = K_{2,3}^{[5]}$}
\end{picture}

\begin{picture}(50,80)
 \put(10,40){\circle*{3}} \put(10,60){\circle*{3}}
\put(40,10){\circle*{3}} \put(40,30){\circle*{3}} \put(40,50){\circle*{3}}
\put(40,70){\circle*{3}}
\qbezier(10,60)(25,65)(40,70) \qbezier(10,60)(25,55)(40,50)
\qbezier(10,60)(25,45)(40,30) \qbezier(10,60)(25,35)(40,10)
\qbezier(10,40)(25,55)(40,70)
\put(10, -10){$K_{2, 4}^{[5]}$}
\end{picture}
\end{multicols}
\bigskip

{\bf Figure 1:} The graphs $K_{2, 3}^{\{5\}}$, $K_{2,3}^{[5]}$ and $K_{2, 4}^{[5]}$.
\end{center}

\bigskip

Since the number $pq-e$ of deleting edges is at most $\max(p, q)$ in $K_{p, q}^{[e]}$,
the constraint $e \geq pq-\max(p,q)$ is implicitly assumed when the notation $K_{p,q}^{[e]}$ is used,
and similarly for the constraint $e \geq pq-\min(p,q)$ in $K_{p,q}^{\{e\}}.$
Then the extremal graph in Conjecture~\ref{conj1} is either $K_{s, t}^{[e]}$ or $K_{s, t}^{\{e\}}$
for some positive integers $s \leq p$ and $t \leq q$ which meet the above constraints.
In 2010 \cite{cfksw:10}, Yi-Fan Chen, Hung-Lin Fu, In-Jae Kim,
Eryn Stehr and Brendon Watts determined $\rho(K_{p,q}^{\{e\}})$
and gave an affirmative answer to Conjecture~\ref{conj1} when $e=pq-2.$
Furthermore, they refined Conjecture~\ref{conj1} for the case when the number
of edges is at least $pq-\min(p,q)+1$ to the following conjecture.
\bigskip

\begin{conj}  \label{conj2}
Suppose $0 < pq-e < \min(p,q)$. Then for $G \in \mathcal{K}(p,q,e),$
\begin{equation}
\rho(G) \leq \rho\left(K_{p, q}^{\{e\}}\right).
\label{fusbd} \nonumber
\end{equation}
\end{conj}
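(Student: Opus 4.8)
Assume without loss of generality that $p\le q$, so the hypothesis reads $1\le k:=pq-e<p$ and the claimed extremal graph is $K_{p,q}^{\{e\}}$. The plan is to convert the graph optimization into an optimization over degree sequences by establishing a \emph{sharp} upper bound on $\rho(G)$ that depends only on the degree sequence of the part of size $p$. Since $G$ is bipartite, $\rho(G)^2$ is the largest eigenvalue of $AA^{\top}$, $A$ the $p\times q$ biadjacency matrix; the diagonal of $AA^{\top}$ is the degree sequence $b_1\ge\cdots\ge b_p$ of the part $X$ of size $p$, $\sum_i b_i=e$, and $(AA^{\top})_{ij}=|N(i)\cap N(j)|$. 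For each threshold $t$ I would split $X$ into the $t$ largest-degree vertices and the other $p-t$, bound the four block maximum row sums of $AA^{\top}$ from above using only $|N(i)\cap N(j)|\le\min(b_i,b_j)\le b_j$, $b_i\le q$ and $\sum_i b_i=e$, and then invoke the standard inequality bounding the spectral radius of a nonnegative matrix by the Perron root of a nonnegative matrix dominating its block maximum row sums, obtaining
\[
\rho(G)^2\ \le\ \rho\!\left(\begin{pmatrix} W_t & e-W_t\\[2pt] t\,b_{t+1} & (p-t)\,b_{t+1}\end{pmatrix}\right),\qquad W_t:=\sum_{i=1}^{t}b_i ,
\]
i.e.\ $\rho(G)^2$ is at most the larger root of an explicit quadratic in $W_t,b_{t+1},e,p,q$. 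Taking $t$ to be the number of $X$-vertices of full degree $q$ makes all four block estimates tight for $G=K_{p,q}^{\{e\}}$, whose $X$-side splits equitably into $p-k$ vertices of degree $q$ and $k$ of degree $q-1$; one computes $\rho(K_{p,q}^{\{e\}})^{2}=\tfrac12\bigl(e+\sqrt{e^{2}-4k(p-k)(q-1)}\,\bigr)$, so the bound is sharp there.

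With this choice of $t$ it remains to show that, over all bipartite-graphical $X$-degree sequences with sum $e$ and every entry at most $q$, the larger root of the quadratic is maximized — uniquely — by the sequence $(q,\dots,q,q-1,\dots,q-1)$ of $K_{p,q}^{\{e\}}$. Writing $b_i=q-d_i$ with $d_i\ge0$ and $\sum_i d_i=k<p$, I expect this root to behave like a Schur-concave function of the deficiency vector $(d_1,\dots,d_p)$, hence to be largest when the deficiencies are as evenly spread as the constraint permits, namely $(1,\dots,1,0,\dots,0)$, which is exactly the deficiency vector of $K_{p,q}^{\{e\}}$. This is the heart of the argument and the step I expect to be the main obstacle: the bound must be weak enough to hold for \emph{every} bipartite graph yet still tight at $K_{p,q}^{\{e\}}$, and transparent enough that the ensuing monotonicity/majorization analysis over the Gale--Ryser set of feasible degree sequences goes through; it is precisely the hypothesis $k<p$ that lets the extremal deficiency vector have all entries at most $1$ and keeps the quadratic tractable.

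Finally, all graphs sharing the $X$-degree sequence of $K_{p,q}^{\{e\}}$ produce the same bound matrix, so the degree-sequence step alone cannot separate them, and I would finish by refining the estimate for these. If in such a $G$ the $k$ deficient $X$-vertices do not all miss one common $Y$-vertex, then the true $(T,T)$ block maximum row sum of $AA^{\top}$ is strictly smaller than the value $(p-t)b_{t+1}=k(q-1)$ used above; since the corresponding $2\times2$ block matrix is irreducible ($G$ is connected, as the $p-k\ge1$ vertices of degree $q$ are joined to all of $Y$) and its Perron root is strictly increasing in each entry, this forces $\rho(G)^2<\rho(K_{p,q}^{\{e\}})^2$. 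Combining the bound of the first paragraph, the degree-sequence optimization, and this comparison gives $\rho(G)\le\rho(K_{p,q}^{\{e\}})$ for every $G\in\mathcal K(p,q,e)$, with equality only at $G=K_{p,q}^{\{e\}}$, which is Conjecture~\ref{conj2}.
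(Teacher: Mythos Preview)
Your plan has a genuine gap at the step you yourself flag as the heart of the argument. The one-sided block bound built from the $X$-degree sequence alone, with $t$ chosen as the number of full-degree $X$-vertices, is \emph{not} maximized at the degree sequence of $K_{p,q}^{\{e\}}$; the hoped-for Schur-concavity fails. Take $p=4$, $q=5$, $e=17$ (so $k=3<p$). For $K_{4,5}^{\{17\}}$ the $X$-degrees are $(5,4,4,4)$, $t=1$, and your $2\times2$ matrix is $\bigl(\begin{smallmatrix}5&12\\4&12\end{smallmatrix}\bigr)$ with larger eigenvalue $\tfrac12(17+\sqrt{241})\approx 16.26$, which is indeed $\rho(K_{4,5}^{\{17\}})^{2}$. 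But the $X$-degree sequence $(5,5,4,3)$ is realizable in $\mathcal K(4,5,17)$ (e.g.\ with $Y$-degrees $(4,4,4,3,2)$), has $t=2$, and your matrix is $\bigl(\begin{smallmatrix}10&7\\8&8\end{smallmatrix}\bigr)$ with larger eigenvalue $\tfrac12(18+\sqrt{228})\approx 16.55>16.26$. Thus your upper bound for such a $G$ already exceeds $\rho(K_{p,q}^{\{e\}})^2$, and no majorization argument on the $X$-side alone can close the gap. (Optimizing over $t$ does help in this example---$t=3$ gives $\tfrac12(17+\sqrt{229})$---but that is a different and considerably more intricate optimization than the one you outlined, and you would need to prove it always suffices.)

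The paper sidesteps this by using a bound that depends on the minimum degrees on \emph{both} sides, $d_p$ and $d'_q$, not on the full one-sided sequence. The resulting two-variable function $\phi_{p,q}(d_p,d'_q)$ is shown to be strictly decreasing in $d_p$, so one may slide $d_p$ down to $e-(p-1)(q-1)-d'_q$; this floor is forced by the combinatorial constraint $d_p+d'_q\ge e-(p-1)(q-1)$, since the edges avoiding both a fixed minimum-degree $X$-vertex and a fixed minimum-degree $Y$-vertex number at most $(p-1)(q-1)$. Along the line $d_p+d'_q=e-(p-1)(q-1)$ a short computation then shows $\phi_{p,q}(d_p,d'_q)\le\rho(K_{p,q}^{\{e\}})$. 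The missing ingredient in your approach is precisely this coupling between the two sides: information from the $Y$-side minimum degree is what tames sequences like $(5,5,4,3)$ that defeat the one-sided bound.
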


\medskip

\medskip

The paper is organized as follows.
Preliminary contents are in Section~\ref{sp}.
Theorem~\ref{thm3} in Section~\ref{s2}  presents
a series of sharp upper bounds of $\rho(G)$ in terms of
the degree sequence of $G.$
Some special cases of Theorem~\ref{thm3}
are further investigated in Section~\ref{sapp} with which Corollary~\ref{corphipq} is the most useful in this paper.
We prove Conjecture~\ref{conj2} as an application of Corollary~\ref{corphipq} in Section~\ref{s_conj}.
Finally we propose another conjecture which is a general refinement of Conjecture~\ref{conj1}
in Section~\ref{concluding}.

\section{Preliminary}   \label{sp}

Basic results are provided in this section for later used.
\medskip

\begin{lem}(\cite[Proposition~2.1]{bf:08})   \label{lem_sqrte}
Let $G$ be a simple bipartite graph  with $e$ edges. Then
$$\rho(G) \leq \sqrt{e}$$ with equality
iff $G$ is a disjoint union of a complete bipartite graph and isolated vertices.
\end{lem}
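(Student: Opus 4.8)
The plan is to read everything off the block structure of the adjacency matrix. Order the vertices so that the two colour classes $U$ and $W$ occur in consecutive blocks, giving
\[
A = \begin{pmatrix} 0 & B \\ B^{T} & 0 \end{pmatrix}, \qquad A^{2} = \begin{pmatrix} BB^{T} & 0 \\ 0 & B^{T}B \end{pmatrix},
\]
where $B$ is the $|U|\times|W|$ biadjacency matrix of $G$. Two elementary facts then do the work. First, $\operatorname{tr}(A^{2}) = \sum_{i,j} a_{ij}^{2} = \sum_{i,j} a_{ij} = 2e$, because $A$ is symmetric with $2e$ entries equal to $1$ and the rest equal to $0$. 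Second, since $G$ is bipartite its spectrum is symmetric about $0$, so both $\rho(G)$ and $-\rho(G)$ are eigenvalues of $A$; summing squares of the eigenvalues of $A$ then gives $\operatorname{tr}(A^{2}) \ge \rho(G)^{2} + (-\rho(G))^{2} = 2\rho(G)^{2}$. Combining the two yields $2\rho(G)^{2} \le 2e$, that is, $\rho(G) \le \sqrt{e}$.

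For the equality case I would track when the inequality $\operatorname{tr}(A^{2}) \ge 2\rho(G)^{2}$ is tight. Writing $\operatorname{tr}(A^{2}) = \sum_{i} \lambda_{i}^{2}$ over the eigenvalues $\lambda_{i}$ of $A$ and peeling off the two terms contributed by $\pm\rho(G)$, equality holds exactly when every remaining eigenvalue of $A$ vanishes. Assuming $e \ge 1$ (the case $e = 0$ being the edgeless graph, for which $\rho(G) = 0 = \sqrt{e}$ trivially), this says $\operatorname{rank}(A) = 2$; and from the block form $\operatorname{rank}(A) = \operatorname{rank}(B) + \operatorname{rank}(B^{T}) = 2\operatorname{rank}(B)$, so equality is equivalent to $\operatorname{rank}(B) = 1$.

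It remains to identify the graphs $G$ with $\operatorname{rank}(B) = 1$. A nonzero $\{0,1\}$-matrix of rank $1$ is, after permuting rows and columns, an all-ones rectangular block padded by zero rows and zero columns: writing $B = uv^{T}$ and putting $S = \{i : u_{i} \ne 0\}$ and $T = \{j : v_{j} \ne 0\}$, each entry $u_{i} v_{j}$ with $i \in S$ and $j \in T$ must equal $1$, which forces the $u_{i}$ with $i \in S$ to be all equal and the $v_{j}$ with $j \in T$ to be all equal, so $B$ is all ones on $S \times T$ and zero off it. In graph language this is exactly the assertion that $G$ is the complete bipartite graph on $S \cup T$ together with isolated vertices; conversely any such graph has biadjacency rank $1$ and hence attains $\rho(G) = \sqrt{e}$. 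The only non-bookkeeping step here is this last rank-one characterization, and it is short, so I do not expect a real obstacle; the one thing to watch is keeping the isolated vertices and the degenerate $e = 0$ case consistent with the statement.
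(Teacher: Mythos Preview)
Your argument is correct. The trace identity $\operatorname{tr}(A^{2})=2e$ combined with the spectral symmetry $\lambda\mapsto-\lambda$ of bipartite graphs gives the inequality cleanly, and your equality analysis is sound: equality forces all eigenvalues other than one copy each of $\pm\rho(G)$ to vanish (in particular the multiplicity of $\rho(G)$ is forced to be $1$), so $\operatorname{rank}(A)=2$ and hence $\operatorname{rank}(B)=1$, and your identification of rank-one $\{0,1\}$-matrices is accurate.

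The paper, however, does not prove this lemma directly: it is quoted from \cite{bf:08} as a preliminary fact, and the only proof the paper itself offers comes much later as a by-product of the general degree-sequence bound (Theorem~3.3 / Corollary~4.2), by adjoining an isolated vertex so that $d_{p}=0$ and reading off $\phi_{p,q}=\sqrt{e}$. So your route is genuinely different and far more elementary---a two-line spectral argument versus the full machinery of the $\phi_{s,t}$ bounds. What the paper's route buys is uniformity: $\rho(G)\le\sqrt{e}$ drops out as one instance of a parametrized family of bounds, with the equality characterization coming from the general $K_{s',t'}+H$ description in Theorem~3.3. What your route buys is self-containment and transparency of the equality case via rank.
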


Let $G$ be a simple bipartite graph  with bipartition orders $p$ and $q$, and degree sequences $d_{1} \geq d_{2} \geq \cdots \geq d_{p}$ and $d'_{1} \geq d'_{2} \geq \cdots \geq d'_{q}$ respectively.
We say that $G$ is \emph{biregular} if $d_{1}=d_{p}$ and $d'_{1}=d'_{q}.$

\begin{lem}(\cite[Lemma~2.1]{bz:01})  \label{lem_d1d1}
Let $G$ be a simple connected  bipartite graph. Then
$$\rho(G) \leq \sqrt{d_{1}d'_{1}}$$
with equality iff $G$ is biregular.
\end{lem}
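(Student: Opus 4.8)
The plan is to run a Perron--Frobenius / walk-counting argument adapted to the bipartite structure. Write the bipartition of $G$ as $VG = X \cup Y$ with $|X| = p$ and $|Y| = q$, so that the degree sequences $d_{1} \geq \cdots \geq d_{p}$ and $d'_{1} \geq \cdots \geq d'_{q}$ belong to $X$ and $Y$ respectively and every edge of $G$ joins $X$ to $Y$. Since $G$ is connected, its adjacency matrix $A$ is irreducible, so by Perron--Frobenius there is a positive eigenvector $x$ with $Ax = \rho x$, where $\rho := \rho(G)$. First I would record the two-step walk identity: for $u \in X$, applying $A$ twice and using bipartiteness (the intermediate vertex lies in $Y$, the far endpoint in $X$) gives
\[
\rho^{2} x_{u} \;=\; \sum_{w \in N(u)} \rho\, x_{w} \;=\; \sum_{w \in N(u)} \sum_{u' \in N(w)} x_{u'} .
\]

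Next, pick $u_{0} \in X$ with $x_{u_{0}} = m := \max_{u \in X} x_{u} > 0$. Bounding every $x_{u'}$ on the right-hand side by $m$, then using $d(w) \leq d'_{1}$ for $w \in N(u_{0}) \subseteq Y$ and $|N(u_{0})| = d(u_{0}) \leq d_{1}$, I would obtain
\[
\rho^{2} m \;\leq\; m \sum_{w \in N(u_{0})} d(w) \;\leq\; m\, d(u_{0})\, d'_{1} \;\leq\; m\, d_{1} d'_{1} ,
\]
and dividing by $m > 0$ yields $\rho \leq \sqrt{d_{1}d'_{1}}$. For the easy direction of the equality statement, if $G$ is biregular I would exhibit the eigenvector directly: set $x_{u} = \sqrt{d_{1}}$ for $u \in X$ and $x_{w} = \sqrt{d'_{1}}$ for $w \in Y$; then $(Ax)_{u} = d_{1}\sqrt{d'_{1}} = \sqrt{d_{1}d'_{1}}\, x_{u}$ and $(Ax)_{w} = d'_{1}\sqrt{d_{1}} = \sqrt{d_{1}d'_{1}}\, x_{w}$, so $x > 0$ is an eigenvector for the value $\sqrt{d_{1}d'_{1}}$, and Perron--Frobenius forces $\rho = \sqrt{d_{1}d'_{1}}$.

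The harder direction — and the step I expect to be the main obstacle — is showing that $\rho = \sqrt{d_{1}d'_{1}}$ forces biregularity. Here I would exploit that equality must hold in \emph{every} inequality of the chain above, giving three local conditions at $u_{0}$: $x_{u'} = m$ for all $u' \in X$ sharing a neighbor with $u_{0}$; $d(w) = d'_{1}$ for all $w \in N(u_{0})$; and $d(u_{0}) = d_{1}$. The first condition says every $u' \in X$ at distance at most $2$ from $u_{0}$ also attains the maximum $m$, so the same argument reapplies at any such $u'$. Since $G$ is connected and bipartite, the relation ``distance at most $2$ within $X$'' connects all of $X$, so by propagation $x$ is constant on $X$ and every vertex of $X$ has degree $d_{1}$; then the second condition, together with connectedness (every vertex of $Y$ has a neighbor in $X$), forces every vertex of $Y$ to have degree $d'_{1}$. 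Hence $d_{p} = d_{1}$ and $d'_{q} = d'_{1}$, i.e. $G$ is biregular. The only delicate points in making this rigorous are the connectedness/propagation bookkeeping and verifying that no inequality in the chain can be slack once the relevant entries of $x$ coincide.
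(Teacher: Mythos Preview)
Your argument is correct. The paper does not prove this lemma directly --- it is quoted from Berman and Zhang --- but in Section~\ref{sapp} it remarks that the bound is the special case $s=t=1$ of Theorem~\ref{thm3}. Specializing the proof of Theorem~\ref{thm3} to $s=t=1$ makes the diagonal matrix $U$ the identity, so $C=A^{2}$, and the $i$-th row sum on the $X$-block is $\sum_{k} n_{ik}=\sum_{w\in N(u_i)} d(w)\le d_i d'_1\le d_1 d'_1$; Lemma~\ref{lem4} then gives $\rho(A^{2})\le d_1 d'_1$, with equality forcing all these row sums equal, hence biregularity. Your route picks the Perron vector and the vertex $u_0$ maximizing it on $X$, then reads off the identical two-step walk inequality $\rho^{2}x_{u_0}=\sum_{w\in N(u_0)}\sum_{u'\in N(w)}x_{u'}\le d_1 d'_1\, x_{u_0}$; this is the eigenvector dual of the row-sum bound. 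The substance is the same, but your version is self-contained and handles the equality case by a direct propagation argument on the set $\{u\in X: x_u=m\}$, whereas the paper obtains it through the general equality clause of Theorem~\ref{thm3} (which for $s=t=1$ forces $s'=t'=0$ and hence $G=H$ biregular).
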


Let $M$ be a real matrix described in the following block form
$$M=\left(
      \begin{array}{ccc}
        M_{1,1} & \cdots & M_{1,m} \\
        \vdots &  & \vdots \\
        M_{m,1} & \cdots & M_{m,m} \\
      \end{array}
    \right),$$
where the diagonal blocks $M_{i, i}$ are square.
Let $b_{i,j}$ denote the average row-sums of $M_{i,j},$ i.e. $b_{i, j}$ is the sum of entries in $M_{i, j}$ divided by the number of rows.
Then $B=(b_{i,j})$ is called a {\it quotient matrix} of $M$. If in addition
for each pair $i, j$, $M_{i,j}$ has constant row-sum, then  $B$ is called an \emph{equitable quotient matrix} of $M$. The following lemma is direct from the definition of matrix multiplication~\cite[Chapter 2]{aw:11}.

\begin{lem}     \label{lemeqpa}
Let $B$ be an equitable quotient matrix of $M$ with an eigenvalue $\theta.$
Then $M$ also has the eigenvalue $\theta.$
\end{lem}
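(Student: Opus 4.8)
The plan is to relate $M$ and $B$ through the characteristic matrix of the partition that produced $B$, and then to carry an eigenvector of $B$ over to one of $M$. Write $n$ for the order of $M$ and, for $1\le i\le m$, let $n_i\ge 1$ be the order of the square diagonal block $M_{i,i}$; thus $n=n_1+\cdots+n_m$ and the index set of the rows (equivalently columns) of $M$ splits as a disjoint union $V_1\cup\cdots\cup V_m$ with $|V_i|=n_i$. Let $S$ be the $n\times m$ matrix whose $(k,i)$ entry equals $1$ if the $k$-th row of $M$ lies in $V_i$ and $0$ otherwise; that is, the $i$-th column of $S$ is the indicator vector of $V_i$.

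The crux is the intertwining identity $MS=SB$, which I would check entrywise. Fix a row index $k$, say $k\in V_i$, and a block index $j$. The $(k,j)$ entry of $MS$ equals $\sum_{\ell\in V_j}M_{k\ell}$, which is exactly the row-sum of row $k$ inside the block $M_{i,j}$; because $B$ is an \emph{equitable} quotient matrix, this row-sum does not depend on which row of $M_{i,j}$ was chosen and equals $b_{i,j}$. On the other hand the $(k,j)$ entry of $SB$ is $\sum_{i'}S_{k i'}b_{i' j}=b_{i,j}$, since row $k$ of $S$ has a single $1$, in column $i$. Hence $MS=SB$.

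Now let $v\ne 0$ satisfy $Bv=\theta v$ (working over $\mathbb{C}$ if $\theta$ is not real). Then $M(Sv)=(MS)v=(SB)v=S(Bv)=\theta\,(Sv)$, and $Sv\ne 0$ because the columns of $S$ are nonzero vectors with pairwise disjoint supports, so $S$ has trivial kernel. Therefore $Sv$ is an eigenvector of $M$ for $\theta$, which is the assertion. I do not expect a genuine obstacle: the only point that needs care is the block-wise verification of $MS=SB$, where the constant-row-sum hypothesis built into the definition of an equitable quotient matrix — absent for a general quotient matrix, for which one only obtains $S^{\top}MS=\operatorname{diag}(n_i)\,B$ and hence mere interlacing — is precisely what forces the identity to hold.
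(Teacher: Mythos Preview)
Your argument is correct and is exactly the standard proof: the paper does not spell it out but simply declares the lemma ``direct from the definition of matrix multiplication'' with a reference to Brouwer--Haemers, which amounts to precisely the intertwining $MS=SB$ via the characteristic matrix $S$ that you wrote down. There is nothing to add.
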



\medskip

The following lemma is a part of the Perron-Frobenius Theorem~\cite[Chapter 2]{m:88}.

\begin{lem}   \label{lem4}
If $M$ is a nonnegative $n \times n$ matrix with largest eigenvalue $\rho(M)$ and row-sums
$r_{1}, r_{2},\ldots,r_{n},$ then
\begin{equation}
\rho(M) \leq \max_{1 \leq i \leq n}r_{i}.   \nonumber
\end{equation}
Moreover, if $M$ is irreducible then the above equality holds
if and only if the row-sums of $M$ are all equal.
\end{lem}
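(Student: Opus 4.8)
The plan is to read both assertions off the Perron--Frobenius theorem. Recall that since $M$ is nonnegative, its largest eigenvalue $\rho(M)$ has a nonnegative eigenvector $x=(x_1,\ldots,x_n)\neq 0$, i.e.\ $Mx=\rho(M)x$; moreover, when $M$ is irreducible this eigenvector may be chosen strictly positive and $\rho(M)$ is a simple eigenvalue.

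For the inequality, fix such an $x$ and pick an index $k$ with $x_k=\max_{1\le i\le n}x_i>0$. Reading off the $k$th coordinate of $Mx=\rho(M)x$ and using $m_{kj}\ge 0$ together with $x_j\le x_k$, we get
\[
\rho(M)\,x_k=\sum_{j=1}^n m_{kj}x_j\le \sum_{j=1}^n m_{kj}x_k=r_k x_k\le\Big(\max_{1\le i\le n}r_i\Big)x_k ,
\]
and dividing through by $x_k>0$ yields $\rho(M)\le\max_i r_i$.

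Now assume $M$ is irreducible. If all row-sums equal a common value $r$, then $M\mathbf 1=r\mathbf 1$ for the all-ones vector $\mathbf 1$, so $r$ is an eigenvalue of $M$; combined with the bound just proved, $r\le\rho(M)\le\max_i r_i=r$, hence $\rho(M)=\max_i r_i$. For the converse, suppose $\rho(M)=\max_i r_i$ and let $x>0$ be the Perron eigenvector. Set $S=\{\,i:x_i=\max_j x_j\,\}$, which is nonempty. Running the displayed chain with $k$ replaced by any $i\in S$ now gives $\rho(M)x_i\le r_i x_i\le\rho(M)x_i$, so equality holds throughout; in particular $\sum_j m_{ij}(x_i-x_j)=0$ is a sum of nonnegative terms, so $m_{ij}>0$ forces $x_j=x_i$, i.e.\ $j\in S$. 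Thus $S$ is closed under passing to out-neighbours in the digraph associated with $M$ (arc $i\to j$ when $m_{ij}>0$); since that digraph is strongly connected, which is exactly irreducibility, $S$ must be all of $\{1,\ldots,n\}$, so $x$ is constant and $Mx=\rho(M)x$ reduces to $r_i=\rho(M)$ for every $i$, i.e.\ all row-sums are equal.

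The one point requiring care is the propagation step in the converse: the hypothesis $\rho(M)=\max_i r_i$ is used precisely to re-run the inequality argument as a chain of equalities at \emph{every} index of $S$, not just at a single maximizing coordinate, and only then does irreducibility let us spread the relation $x_i=x_j$ to the whole vertex set. The remaining manipulations are immediate from the eigenvalue equation.
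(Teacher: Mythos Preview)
Your proof is correct. The paper does not actually prove this lemma: it simply records it as ``a part of the Perron--Frobenius Theorem'' and cites Minc's book, so there is no in-paper argument to compare against. What you have written is the standard self-contained derivation of the row-sum bound and its equality characterization from the Perron eigenvector, and every step is justified (in particular, you correctly use only nonnegativity of the eigenvector for the inequality, and strict positivity plus strong connectivity of the associated digraph for the irreducible equality case).
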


\section{A series of sharp upper bounds of $\rho(G)$}   \label{s2}

We give a series of sharp upper bounds of $\rho(G)$ in terms of the degree sequence of
a bipartite graph $G$ in this section.
The following set-up is for the description of extremal graphs of our upper bounds.
\medskip

\begin{defn}
Let $H,$ $H'$ be two bipartite graphs with given ordered bipartitions
$VH=X \cup Y$ and $VH'=X' \cup Y'.$ The \emph{bipartite sum} $H+H'$ of $H$ and $H'$
(with respect to the given ordered bipartitions)
is the graph obtained from $H$ and $H'$ by adding an edge between $x$ and $y$ for each pair
$(x,y) \in X \times Y' \cup X' \times Y.$
\end{defn}

\medskip

\begin{exam}   \label{exam_almostKpq}
Let $N_{s,t}$ denote the bipartite graph with bipartition orders $s,t$ and without any edges.
Then for $p\leq q$ and $e$ meeting desired constraint,
$K_{p,q}^{[e]}=K_{p-1, q-pq+e}+N_{1, pq-e}$
and $K_{p,q}^{\{e\}}=K_{p-pq+e, q-1}+N_{pq-e, 1}.$
\end{exam}

\begin{thm}  \label{thm3}
Let $G$ be a simple bipartite graph with bipartition  orders $p$ and $q$,
and corresponding degree sequences
$d_{1} \geq d_{2} \geq \cdots \geq d_{p}$ and $d'_{1} \geq d'_{2} \geq \cdots \geq d'_{q}.$
For $1 \leq s \leq p$ and $1\leq t\leq q,$  let
$X_{s,t}=d_{s}d'_{t}+\sum_{i=1}^{s-1}(d_{i}-d_{s})+\sum_{j=1}^{t-1}(d'_{j}-d'_{t})$
and $Y_{s, t}=\sum_{i=1}^{s-1}(d_{i}-d_{s}) \cdot \sum_{j=1}^{t-1}(d'_{j}-d'_{t}).$
Then the spectral radius
$$\rho(G) \leq \phi_{s,t}:=\sqrt{\frac{X_{s, t}+\sqrt{X_{s, t}^{2}-4Y_{s, t}}}{2}}.$$
Furthermore, if $G$ is connected then the above equality holds if and only if
there exists nonnegative integers $s'<s$ and $t'<t,$
and a biregular graph $H$ of bipartition orders $p-s'$ and $q-t'$ respectively
such that $G=K_{s',t'}+H.$
\end{thm}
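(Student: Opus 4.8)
The plan is to bound $\rho(G)$ by exhibiting a suitable nonnegative matrix whose largest row-sum (or, after a rescaling, whose largest eigenvalue via an equitable quotient matrix) is exactly $\phi_{s,t}$. Since $A=A(G)$ is bipartite, write $A=\begin{pmatrix} 0 & N \\ N^{\sT} & 0\end{pmatrix}$ with $N$ the $p\times q$ biadjacency matrix, so that $\rho(G)^2=\rho(NN^{\sT})$ and it suffices to bound the largest eigenvalue of the nonnegative symmetric matrix $NN^{\sT}$. The diagonal entries of $NN^{\sT}$ are the degrees $d_i$ and the off-diagonal $(i,j)$ entry counts common neighbours of vertices $i,j$ in the $p$-side, which is at most $\min(d_i,d_j)$. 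The idea is to compare $NN^{\sT}$ entrywise with a model matrix $M$ built so that row $i$ has a large controlled sum when $i\le s-1$ and a uniform sum when $i\ge s$; here the $\sum_{i=1}^{s-1}(d_i-d_s)$ correction terms appear precisely because the top $s-1$ rows can carry extra weight. First I would make this precise: bound the common-neighbour counts by $\min(d_i,d_j)$, then use the ordering $d_1\ge\cdots\ge d_p$ and the analogous bound $d'_j-d'_t$ on the $q$-side to collapse everything into a $2\times 2$ block structure (top-$(s-1)$ rows versus the rest on each side), whose equitable quotient matrix is a small explicit matrix with characteristic polynomial $\lambda^2 - X_{s,t}\lambda + Y_{s,t}$. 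By Lemma~\ref{lemeqpa} and Lemma~\ref{lem4} this yields $\rho(NN^{\sT})\le \frac{X_{s,t}+\sqrt{X_{s,t}^2-4Y_{s,t}}}{2}$, i.e. $\rho(G)\le\phi_{s,t}$.

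For the equality characterization, the strategy is to run the inequality chain in reverse under the assumption that $G$ is connected, so $A$ is irreducible and Lemma~\ref{lem4}'s equality clause forces all relevant row-sums to coincide. Equality in the common-neighbour bound $\min(d_i,d_j)$ for all pairs $i,j$ in the appropriate ranges forces a nested neighbourhood structure: vertices in the top rows of each side must be adjacent to everything, which is exactly the statement that $G$ contains a $K_{s',t'}$ joined completely to the rest, for the threshold indices $s'<s$, $t'<t$ at which the degree sequence stabilizes. Then the remaining induced subgraph $H$ on $p-s'$ and $q-t'$ vertices must itself meet equality in Lemma~\ref{lem_d1d1} (or its matrix analogue), forcing $H$ biregular, and one checks conversely that any $G=K_{s',t'}+H$ with $H$ biregular does attain $\rho(G)=\phi_{s,t}$ by computing directly with an equitable partition.

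The main obstacle I anticipate is the equality analysis rather than the inequality: translating "equality in every $\min(d_i,d_j)$ bound, simultaneously for the row-sum equalities coming from Lemma~\ref{lem4}" into the clean structural conclusion $G=K_{s',t'}+H$ requires care about which indices genuinely force full adjacency versus which are automatically satisfied because the corresponding degree differences vanish — this is where the indices $s',t'$ (as opposed to $s,t$ themselves) enter, and where connectivity is used to rule out degenerate disconnected configurations. A secondary technical point is verifying that the model matrix $M$ dominates $NN^{\sT}$ in a way compatible with the Perron–Frobenius comparison (one wants $M-NN^{\sT}$ nonnegative, or a monotonicity argument via a common positive eigenvector), which may require choosing $M$ slightly asymmetrically and symmetrizing, or passing through the bipartite double and the quotient matrix directly on $A$ rather than on $NN^{\sT}$.
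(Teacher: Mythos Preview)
Your high-level shape is right --- work with $A^2$ (or $NN^{\sT}$) and reduce to a small matrix whose largest eigenvalue is $\phi_{s,t}^2$ --- but the concrete mechanism you propose has a gap, and it is in the inequality, not the equality case.

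The issue is that the entrywise bound $n_{ij}\le\min(d_i,d_j)$ on $NN^{\sT}$ only carries information about the $d$-side. The term $\sum_{j=1}^{t-1}(d'_j-d'_t)$ in $X_{s,t}$ cannot be extracted from these entrywise bounds; it enters only through the row-sum identity $\sum_{k=1}^{p}n_{ik}=\sum_{\ell:\,v_\ell\sim u_i}d'_\ell\le d_i d'_t+\sum_{h=1}^{t-1}(d'_h-d'_t)$ (this is the paper's Lemma~\ref{rem3}(iv)). So a model matrix $M\ge NN^{\sT}$ built from $\min(d_i,d_j)$ alone will not have the correct spectral radius, and your ``analogous bound on the $q$-side'' has no hook into $NN^{\sT}$ if you are only using entrywise comparisons. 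The ``secondary technical point'' you flag --- making the domination compatible with Perron--Frobenius --- is therefore not secondary: it is exactly where the argument breaks.

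What the paper does instead is a diagonal similarity, not an entrywise domination. One conjugates $A^2$ by $U=\mathrm{diag}(x_1,\dots,x_{s-1},1,\dots,1,x'_1,\dots,x'_{t-1},1,\dots,1)$ with explicitly chosen weights $x_k,x'_\ell$ (depending on $\phi_{s,t}$ itself), and then bounds every row-sum of $U^{-1}A^2U$ by $\phi_{s,t}^2$ using \emph{two} different estimates in tandem: the row-sum bound above for the unweighted part $\sum_k n_{ik}$, and the entrywise bound $n_{ik}\le d_i$ for the weighted correction $\sum_{k<s}(x_k-1)n_{ik}$. The weights are rigged so that these two contributions combine to exactly $\phi_{s,t}^2$ via the quadratic relation $\phi_{s,t}^4-X_{s,t}\phi_{s,t}^2+Y_{s,t}=0$; this is the content of the paper's Lemma~\ref{lem_x}. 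Once you see that the right object is a test vector $(x_1,\dots,x_{s-1},1,\dots,1)$ for $NN^{\sT}$ rather than a dominating matrix, the equality analysis you outline (nested neighbourhoods forcing $K_{s',t'}+H$, then biregularity of $H$) goes through essentially as you describe.
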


\medskip

Before proving Theorem~\ref{thm3}, we mention some simple properties of  $\phi_{s,t}.$

\medskip

\begin{lem}  \label{rem2}
\begin{enumerate}
\item[(i)] $\phi_{1, 1} = \sqrt{d_1d'_1}.$
\item[(ii)] If $d_{s'}=d_{s}$ then $\phi_{s', t}=\phi_{s, t}.$
            If $d'_{t'}=d'_{t}$ then $\phi_{s, t'}=\phi_{s, t}.$
\item[(iii)]
\begin{equation}
\phi^{2}_{s,t} \geq \max \left( \sum_{i=1}^{s-1}(d_{i}-d_{s}), \sum_{j=1}^{t-1}(d'_{j}-d'_{t}) \right)
\nonumber
\end{equation}
with equality iff $\phi^{2}_{s,t}=e.$
\item[(iv)]
$\phi^{4}_{s,t} - X_{s,t}\phi^{2}_{s,t} + Y_{s,t} = 0.$
\end{enumerate}
\end{lem}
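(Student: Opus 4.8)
The plan is to verify (i)--(iv) directly from the definitions of $X_{s,t}$, $Y_{s,t}$ and $\phi_{s,t}$, treating (iv) first because it underlies the others. For (iv), observe that $\phi_{s,t}^{2} = \tfrac12\bigl(X_{s,t}+\sqrt{X_{s,t}^{2}-4Y_{s,t}}\bigr)$ is by construction the larger root of the quadratic $z^{2}-X_{s,t}z+Y_{s,t}=0$ in the variable $z=\phi_{s,t}^{2}$; substituting $z=\phi_{s,t}^{2}$ gives $\phi_{s,t}^{4}-X_{s,t}\phi_{s,t}^{2}+Y_{s,t}=0$ immediately. One only needs to note that the discriminant $X_{s,t}^{2}-4Y_{s,t}$ is nonnegative, so that $\phi_{s,t}$ is a well-defined real number; writing $a=\sum_{i=1}^{s-1}(d_i-d_s)\ge 0$ and $b=\sum_{j=1}^{t-1}(d'_j-d'_t)\ge 0$, we have $X_{s,t}=d_sd'_t+a+b$ and $Y_{s,t}=ab$, so $X_{s,t}^{2}-4Y_{s,t}=(d_sd'_t+a+b)^{2}-4ab\ge (a+b)^{2}-4ab=(a-b)^{2}\ge 0$.

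For (i), take $s=t=1$: the sums $\sum_{i=1}^{0}$ and $\sum_{j=1}^{0}$ are empty, so $a=b=0$, whence $X_{1,1}=d_1d'_1$ and $Y_{1,1}=0$, and therefore $\phi_{1,1}=\sqrt{\tfrac12(d_1d'_1+\sqrt{(d_1d'_1)^2})}=\sqrt{d_1d'_1}$. For (ii), suppose $d_{s'}=d_s$ with $s'<s$; then for every $i$ with $s'\le i<s$ we have $d_i=d_s$ (the sequence is nonincreasing), so the extra terms $\sum_{i=s'}^{s-1}(d_i-d_s)$ vanish and $\sum_{i=1}^{s'-1}(d_i-d_{s'})=\sum_{i=1}^{s-1}(d_i-d_s)$; also $d_{s'}d'_t=d_sd'_t$. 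Hence $a$, $b$, $X$, $Y$ are unchanged when $s$ is replaced by $s'$, and $\phi_{s',t}=\phi_{s,t}$. The case $d'_{t'}=d'_t$ is symmetric.

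For (iii), use the notation $a,b$ as above; by symmetry assume $a\ge b$, so the claim is $\phi_{s,t}^{2}\ge a$. From (iv), $\phi_{s,t}^{2}$ is the larger root of $f(z)=z^2-(d_sd'_t+a+b)z+ab$; since $f(a)=a^2-(d_sd'_t+a+b)a+ab=-a\,d_sd'_t\le 0$ and the parabola opens upward, the larger root is $\ge a$, giving the inequality. Equality $\phi_{s,t}^{2}=a$ forces $f(a)=0$, i.e. $a\,d_sd'_t=0$; combined with $a\ge b$ this means either $a=b=0$ (so $\phi_{s,t}^2=d_sd'_t$ and, since then also $d_s=d'_t$-free, one checks $\phi_{s,t}^2=e$ by counting edges) or $d_sd'_t=0$, and one then identifies $\phi_{s,t}^{2}$ with $\max(a,b)$ and with the edge count $e=\sum d_i=\sum d'_j$ via the telescoping identities $\sum_{i=1}^{p}d_i = \sum_{i=1}^{s-1}(d_i-d_s)+ (\text{remaining terms})$. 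The main obstacle is this last equivalence "equality iff $\phi_{s,t}^2=e$": it requires relating the degree-sum $e$ to the quantities $a$, $b$, $d_s$, $d'_t$ and checking both implications carefully; the inequality direction and parts (i), (ii), (iv) are routine algebra.
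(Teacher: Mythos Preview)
Your treatment of (i), (ii), (iv) and the inequality in (iii) is correct and essentially what the paper does (the paper simply declares (i), (ii), (iv) ``immediate''; your argument that $f(a)=-a\,d_sd'_t\le 0$ so the larger root of $f$ is $\ge a$ is a clean variant of the paper's observation that $X_{s,t}\ge a+b$).

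The gap is in the equality case of (iii), which you yourself flag. Your case split ``$a=b=0$ or $d_sd'_t=0$'' is muddled: from $f(a)=0$ you only know $a$ is \emph{a} root, not the larger one, so in the sub-case $a=b=0$ you correctly compute $\phi_{s,t}^2=d_sd'_t$, but since we are assuming the equality $\phi_{s,t}^2=a=0$ this already forces $d_sd'_t=0$ and the sub-case collapses into the other one. (The phrase ``$d_s=d'_t$-free'' is unclear and should be removed.) The paper handles this more directly via a single equivalence you never state explicitly:
\[
d_sd'_t=0 \iff \max(a,b)=e.
\]
Indeed, if $d_s=0$ then $d_i=0$ for all $i\ge s$, so $a=\sum_{i<s}d_i=e$; conversely $a=e$ gives $(s-1)d_s+\sum_{i\ge s}d_i=0$, forcing $d_s=0$. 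With this in hand, the paper notes $X_{s,t}\ge a+b$ with equality iff $d_sd'_t=0$, and then $\phi_{s,t}^2=\tfrac12\bigl(X_{s,t}+\sqrt{X_{s,t}^2-4ab}\bigr)=\max(a,b)$ iff $X_{s,t}=a+b$, i.e.\ iff $d_sd'_t=0$, i.e.\ iff $\max(a,b)=e$; and in that case $\phi_{s,t}^2=\max(a,b)=e$. This disposes of the forward implication and its contrapositive (which are all that is actually used later in the proof of Theorem~3.3) without any case analysis.
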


\begin{proof}
(i), (ii), (iv) are immediate from the definition of $\phi_{s, t}$.
Clearly $d_{s}d'_{t}=0$ if and only if
$$\max \left( \sum_{i=1}^{s-1}(d_{i}-d_{s}), \sum_{j=1}^{t-1}(d'_{j}-d'_{t}) \right)=e.$$
Hence (iii) follows by using that
$X_{s,t} \geq \sum_{i=1}^{s-1}(d_{i}-d_{s})+\sum_{j=1}^{t-1}(d'_{j}-d'_{t})$ with equality iff $d_{s}d'_{t}=0$
to simplify $\phi_{s,t}$.

\end{proof}

\medskip

We set up notations for the use in the proof of  Theorem~\ref{thm3}.
For $1 \leq k \leq s-1$, let
\begin{equation}
x_{k} =
\left\{
\begin{array}{ll}
  1+\displaystyle\frac{d'_{t}(d_{k}-d_{s})}{\phi_{s, t}^{2}-\sum_{i=1}^{s-1}(d_{i}-d_{s})} &
  , \text{ if } \phi_{s, t}^{2} > \sum_{i=1}^{s-1}(d_{i}-d_{s});\\
  1, & \text{ if } \phi_{s, t}^{2} = \sum_{i=1}^{s-1}(d_{i}-d_{s}),
\end{array}
\right.
\label{eq2.5}
\end{equation}
and for $1\leq \ell \leq t-1$ let
\begin{equation}
x'_{\ell} =
\left\{
\begin{array}{ll}
  1+\displaystyle\frac{d_{s}(d'_{\ell}-d'_{t})}{\phi_{s, t}^{2}-\sum_{j=1}^{t-1}(d'_{j}-d'_{t})}, &
   \text{ if } \phi_{s, t}^{2} > \sum_{j=1}^{t-1}(d'_{j}-d'_{t});\\
  1, &  \text{ if } \phi_{s, t}^{2} = \sum_{j=1}^{t-1}(d'_{j}-d'_{t}).
\end{array}
\right.
\label{eq2.6}
\end{equation}
Note that $x_{k}, x'_{\ell} \geq 1$ because of Lemma~\ref{rem2}(iii).
The relations between the above parameters are given in the following.

\medskip

\begin{lem}   \label{lem_x}
\begin{enumerate}
\item[(i)] Suppose $\phi^{2}_{s,t}>\sum_{a=1}^{s-1}(d_{a}-d_{s}).$
Then
\begin{equation}
\frac{1}{x_{i}} \left(  d_{i}d'_{t} + \sum_{h=1}^{t-1}(d'_{h}-d'_{t})
+ \sum_{k=1}^{s-1}(x_{k}-1)d_{i}  \right) = \phi^{2}_{s,t}
\nonumber
\end{equation}
for $1 \leq i \leq s-1,$ and
\begin{equation}
d_{s}d'_{t}+\sum_{h=1}^{t-1}(d'_{h}-d'_{t})
+ \sum_{k=1}^{s-1}(x_{k}-1)d_{s} = \phi^{2}_{s,t}.
\nonumber
\end{equation}
\item[(ii)] Suppose $\phi^{2}_{s,t}>\sum_{b=1}^{t-1}(d'_{b}-d'_{t}).$
Then
\begin{equation}
\frac{1}{x'_{j}} \left(  d_{s}d'_{j} + \sum_{h=1}^{s-1}(d_{h}-d_{s})
+ \sum_{\ell=1}^{t-1}(x'_{\ell}-1)d'_{j}  \right) = \phi^{2}_{s,t}
\nonumber
\end{equation}
for $1 \leq j \leq t-1,$ and
\begin{equation}
d_{s}d'_{t}+\sum_{h=1}^{s-1}(d_{h}-d_{s})
+ \sum_{\ell=1}^{t-1}(x'_{\ell}-1)d'_{t} = \phi^{2}_{s,t}.
\nonumber
\end{equation}
\end{enumerate}
\end{lem}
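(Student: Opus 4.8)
Lemma~\ref{lem_x} is a pair of algebraic identities asserting that the numbers $x_k$, $x'_\ell$ defined in \eqref{eq2.5} and \eqref{eq2.6} are precisely the entries of a Perron-type eigenvector for a carefully chosen quotient matrix, whose Perron root is $\phi_{s,t}^2$. So the plan is purely computational: substitute the closed-form expressions for $x_k$, $x'_\ell$ into the four displayed equations and verify each one using the defining relation $\phi_{s,t}^4 - X_{s,t}\phi_{s,t}^2 + Y_{s,t} = 0$ from Lemma~\ref{rem2}(iv). Since the two parts (i) and (ii) are obtained from each other by swapping the roles of the two partite sets (i.e.\ interchanging $s \leftrightarrow t$, $d_i \leftrightarrow d'_j$, $x_k \leftrightarrow x'_\ell$), it suffices to prove (i) and note that (ii) follows by symmetry.

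For part (i), write $D := \sum_{a=1}^{s-1}(d_a - d_s)$ and $D' := \sum_{b=1}^{t-1}(d'_b - d'_t)$ for brevity, and observe that $X_{s,t} = d_s d'_t + D + D'$ and $Y_{s,t} = D D'$. The hypothesis is $\phi := \phi_{s,t}^2 > D$, so that $x_k = 1 + d'_t(d_k - d_s)/(\phi - D)$ is well defined. First I would compute $\sum_{k=1}^{s-1}(x_k - 1) = d'_t D/(\phi - D)$. For the second identity of (i), substitute this into the left side: $d_s d'_t + D' + d_s \cdot d'_t D/(\phi-D)$; putting over the common denominator $\phi - D$ gives $\bigl(d_s d'_t(\phi - D) + D'(\phi - D) + d_s d'_t D\bigr)/(\phi - D) = \bigl(d_s d'_t \phi + D'\phi - DD'\bigr)/(\phi-D)$, and this equals $\phi$ precisely when $d_s d'_t \phi + D'\phi - DD' = \phi^2 - D\phi$, i.e.\ $\phi^2 - (d_s d'_t + D + D')\phi + DD' = 0$, which is exactly $\phi_{s,t}^4 - X_{s,t}\phi_{s,t}^2 + Y_{s,t} = 0$. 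For the first identity of (i) (the one with the $1/x_i$ factor and index $i$ ranging over $1 \le i \le s-1$), multiply through by $x_i$: the claim becomes $d_i d'_t + D' + d_i \cdot d'_t D/(\phi - D) = \phi\, x_i = \phi + \phi\, d'_t(d_i - d_s)/(\phi - D)$. Both sides are affine in $d_i$, so I would just compare the coefficient of $d_i$ and the constant term (treating $d_s$, $d'_t$, $D$, $D'$, $\phi$ as fixed): the coefficient-of-$d_i$ equation is $d'_t + d'_t D/(\phi-D) = \phi d'_t/(\phi - D)$, i.e.\ $(\phi - D) + D = \phi$, which is trivially true; and the constant-term equation (set $d_i = 0$, or equivalently subtract $d_i$ times the first equation from the whole) reduces again to $\phi^2 - (d_s d'_t + D + D')\phi + DD' = 0$, i.e.\ Lemma~\ref{rem2}(iv). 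This completes (i), and (ii) follows by the $s \leftrightarrow t$ symmetry described above.

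There is no real obstacle here — the only thing to be careful about is bookkeeping: keeping the two abbreviations $D$ and $D'$ straight, not dividing by $\phi - D$ or $\phi - D'$ without invoking the stated hypothesis, and correctly reducing everything to the single quadratic $\phi^2 - X_{s,t}\phi + Y_{s,t} = 0$. The lemma is really just the verification step that makes the quotient-matrix argument in the proof of Theorem~\ref{thm3} go through: the vector with entries $x_1, \dots, x_{s-1}, 1$ (on the $d$-side) and $x'_1, \dots, x'_{t-1}, 1$ (on the $d'$-side), suitably assembled, will be the equitable-quotient eigenvector for eigenvalue $\phi_{s,t}$, and Lemma~\ref{lemeqpa} together with Lemma~\ref{lem4} then yields the bound $\rho(G) \le \phi_{s,t}$.
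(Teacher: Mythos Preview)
Your proof is correct and follows essentially the same approach as the paper: substitute the closed form of $x_k$ from \eqref{eq2.5}, put everything over the common denominator $\phi_{s,t}^2-\sum_{k=1}^{s-1}(d_k-d_s)$, and reduce to the quadratic relation $\phi_{s,t}^4-X_{s,t}\phi_{s,t}^2+Y_{s,t}=0$ of Lemma~\ref{rem2}(iv), then invoke the $s\leftrightarrow t$ symmetry for (ii). The only cosmetic difference is that for the first identity you compare the $d_i$-coefficient and constant term separately rather than simplifying the full fraction at once, which is a perfectly fine and arguably cleaner bookkeeping choice.
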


\begin{proof}
Referring to \eqref{eq2.5} and Lemma~\ref{rem2}(iv),
\begin{eqnarray}
&&\frac{1}{x_{i}} \left( d_{i}d'_{t}+\sum_{h=1}^{t-1}(d'_{h}-d'_{t})
+ \sum_{k=1}^{s-1}(x_{k}-1)d_{i}  \right)
\nonumber   \\
&=& \frac{1}{\phi^{2}_{s,t}-\sum_{k=1}^{s-1}(d_{k}-d_{s})+d'_{t}(d_{i}-d_{s})}
\left(
\phi^{2}_{s,t}\left(d_{i}d'_{t}+\sum_{h=1}^{t-1}(d'_{h}-d'_{t})\right)
-\sum_{h=1}^{t-1}(d'_{h}-d'_{t})\sum_{k=1}^{s-1}(d_{k}-d_{s})
\right)
\nonumber   \\
&=& \phi^{2}_{s,t}
\nonumber
\end{eqnarray}
for $1 \leq i \leq s-1,$ and
\begin{eqnarray}
&& d_{s}d'_{t} + \sum_{h=1}^{t-1}(d'_{h}-d'_{t}) + \sum_{k=1}^{s-1}(x_{k}-1)d_{s}
\nonumber   \\
&=& \frac{1}{\phi^{2}_{s,t}-\sum_{k=1}^{s-1}(d_{k}-d_{s})}
\left(
\phi^{2}_{s,t}\left(d_{s}d'_{t}+\sum_{h=1}^{t-1}(d'_{h}-d'_{t})\right)
-\sum_{h=1}^{t-1}(d'_{h}-d'_{t})\sum_{k=1}^{s-1}(d_{k}-d_{s})
\right)
\nonumber \\
&=& \phi^{2}_{s,t}.
\nonumber
\end{eqnarray}
Hence (i) follows.
Similarly, referring to \eqref{eq2.6} and Lemma~\ref{rem2}(iv) we have (ii).
\end{proof}
\medskip

Let $U=\{u_i~|~1\leq i\leq p\}$ and $V=\{v_j~|~1\leq j\leq q\}$ be bipartition of $G$ such that the degree sequences
$d_{1} \geq d_{2} \cdots \geq d_{p}$ and $d'_{1} \geq d'_{2} \cdots \geq d'_{q},$
respectively are according to the list.
For $1 \leq i,j \leq p,$ let $n_{ij}$ denote the numbers of common neighbors of $u_{i}$ and $u_{j},$ i.e.,
$n_{ij} = |G(u_{i}) \cap G(u_{j})|$ where $G(u)$ is the set of neighbors of the vertex $u$ in $G.$
Similarly, for $1 \leq i,j \leq q$ let $n'_{ij} = |G(v_{i}) \cap G(v_{j})|.$
Since $G$ is bipartite, the adjacency matrix $A$ and its square $A^2$ look like the following in block form:
\begin{equation}
A=\left(
         \begin{array}{cc}
           0 & B \\
           B^{T} & 0 \\
         \end{array}
       \right),\quad
A^{2} = \left(
           \begin{array}{cc}
             BB^{T} & 0 \\
             0 & B^{T}B \\
           \end{array}
         \right)=
         \left(
           \begin{array}{cc}
             (n_{ij})_{1 \leq i,j \leq p} & 0 \\
             0 & (n'_{ij})_{1 \leq i,j \leq q} \\
           \end{array}
         \right).
\label{e}
\end{equation}
 We have the following properties of $n_{ij}$ and $n'_{ij}$.

\medskip

\begin{lem} \label{rem3}
\begin{enumerate}
\item[(i)] For $1 \leq i \leq p$ and $1 \leq j \leq q,$ $n_{ii}=d_{i}$ and $n'_{jj}=d'_{j}.$
\item[(ii)] For $1 \leq i, j \leq p,$ $n_{ij} \leq d_{i}$ with equality if and only if
$G(u_{j}) \supseteq G(u_{i}).$
\item[(iii)] For $1 \leq i, j \leq q,$ $n'_{ij} \leq d'_{i}$ with equality if and only if
$G(v_{j}) \supseteq G(v_{i}).$
\item[(iv)] For $1 \leq i \leq p,$
$$\sum_{k=1}^{p}n_{ik} = \sum_{j:~u_{i}v_{j}\in EG}d'_{j}
\leq (d_{i}-t+1)d'_{t} + \sum_{h=1}^{t-1}d'_{h} .$$
\item[(v)] For $1 \leq j \leq q,$
$$\sum_{k=1}^{q}n'_{jk} = \sum_{i:~u_{i}v_{j}\in EG}d_{i}
\leq (d'_{j}-s+1)d_{s} + \sum_{h=1}^{s-1}d_{h}.$$
\end{enumerate}
\end{lem}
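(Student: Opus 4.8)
The plan is to read off each item directly from the combinatorial meaning of the matrices $BB^T$ and $B^TB$ in \eqref{e}, using the degree‑sequence ordering $d_1\ge\cdots\ge d_p$ and $d'_1\ge\cdots\ge d'_q$ to control the sums. For (i): the diagonal entry $n_{ii}=|G(u_i)\cap G(u_i)|=|G(u_i)|=d_i$, and likewise $n'_{jj}=d'_j$. For (ii): $n_{ij}=|G(u_i)\cap G(u_j)|\le|G(u_i)|=d_i$, with equality exactly when $G(u_i)\subseteq G(u_j)$; item (iii) is identical with the roles of the two sides of the bipartition swapped.

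For (iv), I would first rewrite $\sum_{k=1}^p n_{ik}$. Since $n_{ik}=|G(u_i)\cap G(u_k)|=\sum_{j:\,u_iv_j\in EG,\ u_kv_j\in EG}1$, summing over $k$ and interchanging the order of summation gives $\sum_{k=1}^p n_{ik}=\sum_{j:\,u_iv_j\in EG}|G(v_j)|=\sum_{j:\,u_iv_j\in EG}d'_j$, which is the stated equality (this is just the $(i,i)$ entry of $BB^TB^T$... more simply, row $i$ of $BB^T$ summed equals row $i$ of $B$ times the column‑sum vector of $B^T$, i.e. the degree vector on $V$). Now the vertex $u_i$ has exactly $d_i$ neighbors among $v_1,\dots,v_q$, so $\sum_{j:\,u_iv_j\in EG}d'_j$ is a sum of $d_i$ of the numbers $d'_1\ge\cdots\ge d'_q$; since $d_i\ge t$ need not hold we argue as follows. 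If $d_i\ge t-1$, the largest such sum is obtained by taking $d'_1,\dots,d'_{t-1}$ and then $d_i-(t-1)$ further terms each at most $d'_t$, giving the bound $(d_i-t+1)d'_t+\sum_{h=1}^{t-1}d'_h$; if $d_i<t-1$ then already $\sum_{j:\,u_iv_j\in EG}d'_j\le\sum_{h=1}^{d_i}d'_h\le\sum_{h=1}^{t-1}d'_h+(d_i-t+1)d'_t$ since the added quantity $(d_i-t+1)d'_t$ is $\le 0$ and $\sum_{h=d_i+1}^{t-1}d'_h\ge 0$ (when the range is nonempty) compensates — so the inequality holds in all cases. Item (v) is proved the same way, summing rows of $B^TB$ and using $d_1\ge\cdots\ge d_p$ together with $|G(v_j)|=d'_j$.

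The only place requiring care is the majorization step in (iv) and (v): one must check the claimed bound is valid without assuming $d_i\ge t-1$ (resp. $d'_j\ge s-1$), i.e. that the expression $(d_i-t+1)d'_t+\sum_{h=1}^{t-1}d'_h$ really is an upper bound even when the "correction" term is negative. This is the main (though still routine) obstacle; it is settled by the case analysis above, or uniformly by noting that for any size‑$d_i$ subset $S\subseteq\{1,\dots,q\}$ one has $\sum_{j\in S}d'_j-\sum_{h=1}^{t-1}d'_h\le\sum_{j\in S,\ j\ge t}d'_j\le |S\cap\{t,\dots,q\}|\cdot d'_t\le (d_i-t+1)d'_t$ when $d_i\ge t-1$, and $\le 0\le (d_i-t+1)d'_t+$ (the missing nonnegative terms) otherwise. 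Everything else is immediate from \eqref{e} and the definitions.
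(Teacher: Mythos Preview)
Your proof is correct and follows the same route as the paper's, which is even terser: it simply declares (i)--(iii) ``immediate from the definition of $n_{ij}$'', obtains the equality in (iv) by double-counting pairs $(u_k,v_j)$ with $v_j\in G(u_i)\cap G(u_k)$, and dismisses the inequality as ``clear since $d'_j$ is non-increasing'' without the case split you supply.  One minor slip in your redundant ``uniform'' argument at the end: the step $|S\cap\{t,\dots,q\}|\cdot d'_t\le (d_i-t+1)d'_t$ fails in general (e.g.\ if $S\subseteq\{t,\dots,q\}$ then the left side is $d_i\,d'_t$), but this is harmless since your preceding case analysis already establishes (iv) completely.
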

\begin{proof}
(i)-(iii) are immediate from the definition of $n_{ij}$.
Counting the pairs $(u_k, v_j)$ such that $v_j\in G(u_i)\cap G(u_k)$  in two orders $(j, k)$ and $(k, j)$, we have the first equality in (iv).
The second inequality of (iv) is clear since $d'_j$ is non-increasing. (v) is similar to (iv).
\end{proof}

\noindent{\bf The proof of Theorem~\ref{thm3}}
\begin{proof}
Clearly $\rho(A)^2\leq \rho(A^2).$
In the following we will show that $\rho(A^{2}) \leq \phi_{s,t}^{2}.$
Let $$U = diag(\underbrace{x_{1}, x_2, \cdots, x_{s-1}, 1, \cdots, 1}_{p},
\underbrace{x'_{1}, x'_2, \cdots, x'_{t-1}, 1, \cdots, 1}_{q})$$
be a diagonal  matrix of order $p+q.$ Let
$C = U^{-1} A^2 U.$ Then $A^2$ and $C$ are similar and with the same spectrum.
Let $r_{1},\cdots,r_{p},r'_{1},\cdots,r'_{q}$ be the row-sums of $C.$
Referring to \eqref{e}, we have
\begin{eqnarray}
r_{i} &=& \sum_{k=1}^{s-1}\frac{x_{k}}{x_{i}}n_{ik} + \sum_{k=s}^{p}\frac{1}{x_{i}}n_{ik}
= \frac{1}{x_{i}}\sum_{k=1}^{p}n_{ik} + \frac{1}{x_{i}}\sum_{k=1}^{s-1}(x_{k}-1)n_{ik}
~~~\text{for}~~~ 1 \leq i \leq s-1;
\label{eq2.1}   \\
r_{i} &=& \sum_{k=1}^{s-1}x_{k}n_{ik} + \sum_{k=s}^{p}n_{ik}
=\sum_{k=1}^{p}n_{ik} + \sum_{k=1}^{s-1}(x_{k}-1)n_{ik}
~~~\text{for}~~~ s \leq i \leq p;
\label{eq2.2}   \\
r'_{j} &=& \sum_{\ell=1}^{t-1}\frac{x'_{\ell}}{x'_{j}}n'_{j\ell} + \sum_{\ell=t}^{q}\frac{1}{x'_{j}}n'_{j\ell}
= \frac{1}{x'_{j}}\sum_{\ell=1}^{q}n'_{j\ell} + \frac{1}{x'_{j}}\sum_{\ell=1}^{t-1}(x'_{\ell}-1)n'_{j\ell}
~~~\text{for}~~~ 1 \leq j \leq t-1;
\label{eq2.3}   \\
r'_{j} &=& \sum_{\ell=1}^{t-1}x'_{\ell}n'_{j\ell} + \sum_{\ell=t}^{q}n'_{j\ell}
=\sum_{\ell=1}^{q}n'_{j\ell} + \sum_{\ell=1}^{t-1}(x'_{\ell}-1)n'_{j\ell}
~~~\text{for}~~~ t \leq j \leq q.
\label{eq2.4.3}
\end{eqnarray}
If $\phi^{2}_{s,t}=\sum_{a=1}^{s-1}(d_{a}-d_{s})$ then $x_{k}=1$ for $1 \leq k \leq s-1$
by \eqref{eq2.5} and $\phi^{2}_{s,t}=e$ by Lemma~\ref{rem2}(iii).
Hence \eqref{eq2.1} and \eqref{eq2.2} become
\begin{equation}
r_{i} = \sum_{k=1}^{p}n_{ik} = \sum_{j:~u_{i}v_{j}\in EG}d'_{j} \leq e = \phi^{2}_{s,t}
\end{equation}
for $1 \leq i \leq p.$
Suppose $\phi^{2}_{s,t}>\sum_{a=1}^{s-1}(d_{a}-d_{s}).$
Referring to \eqref{eq2.1} and \eqref{eq2.2}, for $1 \leq i \leq s-1$
\begin{equation}
r_{i} \leq \frac{1}{x_{i}} \left( (d_{i}-t+1)d'_{t}+\sum_{h=1}^{t-1}d'_{h} \right)
+ \frac{1}{x_{i}}\sum_{k=1}^{s-1}(x_{k}-1)d_{i} = \phi^{2}_{s,t}
\label{eq3.1}
\end{equation}
and for $s \leq i \leq p$
\begin{eqnarray}
r_{i} &\leq&  (d_{i}-t+1)d'_{t}+\sum_{h=1}^{t-1}d'_{h}
+ \sum_{k=1}^{s-1}(x_{k}-1)d_{i}
\label{eq3.2.1}   \\
&\leq& (d_{s}-t+1)d'_{t}+\sum_{h=1}^{t-1}d'_{h}
+ \sum_{k=1}^{s-1}(x_{k}-1)d_{s} = \phi^{2}_{s,t},
\label{eq3.2.2}
\end{eqnarray}
where the inequalities are from Lemma~\ref{rem3}(ii)(iv) and the nonincreasing of degree sequence,
and the equalities are from Lemma~\ref{lem_x}(i).
Thus, $r_{i} \leq \phi^{2}_{s,t}$ for $1 \leq i \leq p.$
Similarly, referring to \eqref{eq2.3}, \eqref{eq2.4.3},
Lemma~\ref{rem3}(iii)(v), the nonincreasing of degree sequence, and Lemma~\ref{lem_x}(ii) we have
$r'_{j} \leq \phi^{2}_{s,t}$ for $1 \leq j \leq q.$
Hence $\rho(A^2) = \rho(C) \leq \phi_{s, t}^{2}$ by Lemma~\ref{lem4}.
\medskip

To verify the second part of Theorem~\ref{thm3}, assume that $G$ is connected.
We prove the sufficient conditions of $\rho(G) = \phi_{s, t}.$
If $s'=0$ or $t'=0$ then $G$ is biregular.
By Lemma~\ref{lem_d1d1} and Lemma~\ref{rem2}(i)(ii), $\rho(G)=\sqrt{d_{1}d'_{1}}=\phi_{s,t}$.
Suppose $s'=0$ and $t' \geq 1.$ Then $d_{1}=d_{p}$ and
$p=d_{1}'=d'_{t'} \geq d'_{t'+1}=d'_{q}.$
We take the equatable quotient matrix $E$ of $A$ with respect to the partition
$\{\{1,\ldots,p\},\{p+1,\ldots,p+t'\},\{p+t'+1,\ldots,p+q\}\}.$ Hence
$$E = \left(
        \begin{array}{ccc}
          0 & t' & d_{s}-t' \\
          p & 0 & 0 \\
          d'_{t} & 0 & 0 \\
        \end{array}
      \right).$$
The eigenvalues of $E$ are $0$ and
$\pm \sqrt{d_{s}d'_{t}+(p-d'_{t})(t'-1)} = \pm \phi_{s,t}.$
By Lemma~\ref{lemeqpa}, $\phi_{s,t}$ is also an eigenvalue of $A.$
Since $\rho(G) \leq \phi_{s,t}$ has been shown in the first part, we have $\rho(G)=\phi_{s,t}.$
Similarly for the case $s' \geq 1$ and $t'=0.$
Suppose $s' \geq 1$ and $t' \geq 1.$
Then $q=d_{1}=d_{s'} \geq d_{s'+1}=d_{p}$ and $p=d'_{1}=d'_{t'} \geq d'_{t'+1}=d'_{q}.$
We take the equatable quotient matrix $F$ of $A$ with respect to the partition
$\{\{1,\ldots,s'\},\{s'+1,\ldots,p\},\{p+1,\ldots,p+t'\},\{p+t'+1,\ldots,p+q\}\}.$ Hence
$$F = \left(
        \begin{array}{cccc}
          0 & 0 & t' & q-t' \\
          0 & 0 & t' & d_{s}-t' \\
          s' & p-s' & 0 & 0 \\
          s' & d'_{t}-s' & 0 & 0 \\
        \end{array}
      \right).$$
Then the eigenvalues of $F$ are
$$\pm \sqrt{\frac{X_{s,t}\pm\sqrt{X_{s,t}^{2}-4Y_{s,t}}}{2}}.$$
We see $\phi_{s,t}$ is an eigenvalue of $F,$ and
by Lemma~\ref{lemeqpa} $\phi_{s,t}$ is also an eigenvalue of $A.$
Hence $\rho(G)=\phi_{s,t}.$
Here we complete the proof of the sufficient conditions of $\phi_{s,t} = \rho(G).$
\medskip

To prove the necessary conditions of $\rho(G)=\phi_{s,t},$ suppose $\rho(G)=\phi_{s,t}.$
Then by Lemma~\ref{lem4} $r_{i}=r'_{j}=\phi^{2}_{s,t}$
for $1 \leq i \leq p$ and $1 \leq j \leq q.$
Let $s'<s$ and $t'<t$ be the smallest nonnegative integers
such that $d_{s'+1}=d_{s}$ and $d'_{t'+1}=d_{t},$ respectively.
We prove either $d_{1}=d_{p}$ or $q=d_{1}=d_{s'}>d_{s'+1}=d_{p}$ in the following.
The connectedness of $G$ implies $d_{s}d'_{t}>0$ so that
\begin{equation}
\phi^{2}_{s,t} > \max \left( \sum_{i=1}^{s-1}(d_{i}-d_{s}), \sum_{j=1}^{t-1}(d'_{j}-d'_{t}) \right)
\nonumber
\end{equation}
by Lemma~\ref{rem2}(iii).
Hence the equalities in \eqref{eq3.1} to \eqref{eq3.2.2} all hold.
The choose of $s'$ and the equalities in \eqref{eq3.2.2} imply that $d_{s'+1}=d_{s}=d_{p}.$
If $s'=0$ then $d_{1}=d_{p}.$ Suppose $s' \geq 1.$
For $1 \leq i \leq s',$
since $d_{i}>d_{s}$ and $\phi^{2}_{s,t} > \sum_{a=1}^{s-1}(d_{a}-d_{s})$, we have $x_{i} > 1$
by \eqref{eq2.5}.
The equalities in \eqref{eq3.1} imply $n_{ik}=d_{i}$ and then $G(u_{k}) \supseteq G(u_{i})$
by Lemma~\ref{rem3}(ii) for $1 \leq k \leq s'$ and $1 \leq i \leq s-1.$
Similarly the equalities in \eqref{eq3.2.1} imply $G(u_{k}) \supseteq G(u_{i})$
for $1 \leq k \leq s'$ and $s \leq i \leq p$ by Lemma~\ref{rem3}(ii).
That is,
\begin{equation}
G(u_{1})=G(u_{2})=\cdots=G(u_{s'}) \supseteq G(u_{i}) ~~~\text{for}~~~ s'+1 \leq i \leq p.
\nonumber
\end{equation}
Due to the connectedness of $G,$ $d_{1}=d_{s'}=q.$ The result follows.
Similarly, either $d'_{1}=d'_{q}$ or $p=d'_{1}=d'_{t'}>d'_{t'+1}=d'_{q}.$
Clearly that the graphs with those degree sequences are
$K_{s',t'}+H$ for
some biregular graph $H$ of bipartition orders $p-s'$ and $q-t'$ respectively.
Here we complete the proof for the necessary conditions of $\phi_{s,t} = \rho(G),$
and also for the the Theorem~\ref{thm3}.
\end{proof}
\medskip

\begin{rem}
Other previous results shown by  the style of the above proof
can be found in \cite{sw:04, lw:13, cls:13, hw:13}. Similar earlier results
are referred to  \cite{ra:85, bs:86, r:87, h:98, hsf:01}.
\end{rem}

\section{A few special cases of Theorem~\ref{thm3}}   \label{sapp}

In this section we study some special cases of $\phi_{s, t}$ in Theorem~\ref{thm3}.
We follow the notations in Theorem 3.3.
As $\phi_{1,1} = \sqrt{d_{1}d'_{1}}$ in Lemma~\ref{rem2}(i),
Theorem~\ref{thm3} provides another proof of
$\rho(G) \leq \sqrt{d_{1}d'_{1}}$ in Lemma~\ref{lem_d1d1}.
Applying  Theorem~\ref{thm3} and simplifying the formula $\phi_{s, t}$ in
cases $(s, t)=(1, q)$ and $(s, t)=(p, 1),$ we have the following corollary.

\begin{cor}
\begin{enumerate}
\item[(i)] $\rho(G) \leq \phi_{1,q} = \sqrt{e-(q-d_{1})d'_{q}}.$
\item[(ii)] $\rho(G) \leq \phi_{p,1} = \sqrt{e-(p-d'_{1})d_{p}}.$
\end{enumerate} \qed
\end{cor}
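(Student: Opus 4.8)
The plan is to derive both identities as direct specializations of Theorem~\ref{thm3}, reducing each to an algebraic simplification of the radical expression $\phi_{s,t}$. For part~(i), I would set $s=1$ and $t=q$. With $s=1$ the sum $\sum_{i=1}^{s-1}(d_i-d_s)$ is empty, hence equals $0$, which forces $Y_{1,q}=0$ since $Y_{s,t}$ carries that empty sum as a factor. Then $X_{1,q}^2-4Y_{1,q}=X_{1,q}^2$, so the nested radical collapses: $\phi_{1,q}=\sqrt{(X_{1,q}+|X_{1,q}|)/2}=\sqrt{X_{1,q}}$ (using $X_{1,q}\ge 0$, which holds because each term is a nonnegative degree difference or a product of degrees). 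It then remains to check $X_{1,q}=e-(q-d_1)d'_q$.

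The identity $X_{1,q}=e-(q-d_1)d'_q$ is the only real computation. By definition $X_{1,q}=d_1 d'_q+\sum_{j=1}^{q-1}(d'_j-d'_q)=d_1 d'_q+\sum_{j=1}^{q}d'_j-q\,d'_q$. Now $\sum_{j=1}^{q}d'_j=e$ since it counts all edges from the size-$q$ side, so $X_{1,q}=d_1 d'_q+e-q\,d'_q=e-(q-d_1)d'_q$, as wanted. Part~(ii) is entirely symmetric: take $s=p$ and $t=1$, observe the empty sum $\sum_{j=1}^{t-1}(d'_j-d'_1)=0$ makes $Y_{p,1}=0$ and hence $\phi_{p,1}=\sqrt{X_{p,1}}$, and then $X_{p,1}=d_p d'_1+\sum_{i=1}^{p-1}(d_i-d_p)=d_p d'_1+\sum_{i=1}^p d_i-p\,d_p=e-(p-d'_1)d_p$, again using that $\sum_{i=1}^p d_i=e$.

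The inequalities $\rho(G)\le\phi_{1,q}$ and $\rho(G)\le\phi_{p,1}$ are immediate from Theorem~\ref{thm3} applied at $(s,t)=(1,q)$ and $(s,t)=(p,1)$ respectively, with no connectedness hypothesis needed for the bound itself. I do not anticipate any genuine obstacle here: the statement is a corollary precisely because the general $\phi_{s,t}$ degenerates to a single square root whenever $s=1$ or $t=1$ (one of the two bracketed sums in $Y_{s,t}$ vanishes), and the remaining arithmetic is the elementary observation that summing a full degree sequence on either side recovers the edge count $e$. The only point to state with a little care is the sign check $X_{1,q}\ge 0$ (equivalently $X_{p,1}\ge 0$) needed to simplify $\sqrt{X^2}=X$ rather than $|X|$; this is clear since $X_{s,t}$ is a sum of the nonnegative quantity $d_sd'_t$ and nonnegative telescoping differences.
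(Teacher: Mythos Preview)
Your proposal is correct and follows exactly the approach the paper indicates: specialize Theorem~\ref{thm3} to $(s,t)=(1,q)$ and $(s,t)=(p,1)$, observe that one factor of $Y_{s,t}$ is an empty sum so the nested radical collapses, and simplify $X_{s,t}$ using $\sum_j d'_j=\sum_i d_i=e$. The only detail you make explicit that the paper leaves implicit is the nonnegativity of $X_{1,q}$ (resp.\ $X_{p,1}$) needed to write $\sqrt{X^2}=X$, and your justification for it is fine.
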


\medskip

We can quickly observe that
\begin{equation}
X_{p,q}=d_{p}d'_{q}+(e-pd_{p})+(e-qd'_{q})=2e-(pd_{p}+qd'_{q}-d_{p}d'_{q})
\label{eqXpqab}
\end{equation}
and
\begin{equation}
Y_{p,q}=(e-pd_{p})(e-qd'_{q}).
\label{eqYpqab}
\end{equation}
Hence we have the following corollary.
\begin{cor}     \label{corphipq}
$$
\rho(G) \leq  \sqrt{\frac{2e-(pd_{p}+qd'_{q}-d_{p}d'_{q})+\sqrt{(pd_{p}+qd'_{q}-d_{p}d'_{q})^{2}-4d_{p}d'_{q}(pq-e)}}{2}}.
$$ \qed
\end{cor}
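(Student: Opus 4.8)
The plan is to obtain Corollary~\ref{corphipq} as the special case $(s,t)=(p,q)$ of Theorem~\ref{thm3}, so the work is purely computational: I need to rewrite $\phi_{p,q}$ in the claimed closed form. First I would recall the definitions of $X_{s,t}$ and $Y_{s,t}$ from Theorem~\ref{thm3} and specialize them at $s=p$, $t=q$. The key observation is that $\sum_{i=1}^{p-1}(d_i-d_p)=\left(\sum_{i=1}^{p}d_i\right)-pd_p=e-pd_p$, since the sum of the degrees on one side of a bipartite graph equals the number of edges $e$; likewise $\sum_{j=1}^{q-1}(d'_j-d'_q)=e-qd'_q$. Substituting these into the definitions gives exactly equations~\eqref{eqXpqab} and~\eqref{eqYpqab}, namely $X_{p,q}=d_pd'_q+(e-pd_p)+(e-qd'_q)$ and $Y_{p,q}=(e-pd_p)(e-qd'_q)$, as already displayed in the excerpt.

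Next I would simplify the discriminant $X_{p,q}^2-4Y_{p,q}$ appearing under the inner square root in $\phi_{s,t}$. Writing $\alpha:=e-pd_p$ and $\beta:=e-qd'_q$ for brevity, we have $X_{p,q}=d_pd'_q+\alpha+\beta$ and $Y_{p,q}=\alpha\beta$, so
\[
X_{p,q}^2-4Y_{p,q}=(d_pd'_q+\alpha+\beta)^2-4\alpha\beta=(d_pd'_q)^2+2d_pd'_q(\alpha+\beta)+(\alpha-\beta)^2.
\]
I would then re-expand this in terms of the original quantities. Using $\alpha+\beta=2e-pd_p-qd'_q$ one checks that $d_pd'_q+\alpha+\beta=2e-(pd_p+qd'_q-d_pd'_q)$, which is the quantity appearing in the numerator, and a short manipulation shows $X_{p,q}^2-4Y_{p,q}=(pd_p+qd'_q-d_pd'_q)^2-4d_pd'_q(pq-e)$; the cleanest route is to verify the identity $(d_pd'_q+\alpha+\beta)^2-4\alpha\beta=(pd_p+qd'_q-d_pd'_q)^2-4d_pd'_q(pq-e)$ directly by substituting $\alpha=e-pd_p$, $\beta=e-qd'_q$ and collecting terms, noting that $pq-e = pq - e$ pairs naturally with $d_pd'_q$.

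Finally I would plug these two simplified expressions into the formula $\phi_{s,t}=\sqrt{\bigl(X_{s,t}+\sqrt{X_{s,t}^2-4Y_{s,t}}\bigr)/2}$ from Theorem~\ref{thm3} at $(s,t)=(p,q)$, yielding precisely
\[
\rho(G)\le\phi_{p,q}=\sqrt{\frac{2e-(pd_p+qd'_q-d_pd'_q)+\sqrt{(pd_p+qd'_q-d_pd'_q)^2-4d_pd'_q(pq-e)}}{2}},
\]
which is the assertion of Corollary~\ref{corphipq}. The bound $\rho(G)\le\phi_{p,q}$ itself is immediate from Theorem~\ref{thm3} with $s=p$, $t=q$. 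The only mild subtlety is bookkeeping in the discriminant identity, since several cross terms must cancel; I would double-check it by evaluating both sides at a small example such as $K_{p,q}$ (where $d_p=q$, $d'_q=p$, $e=pq$, so both sides should reduce to $(pq)^2$, giving $\phi_{p,q}=\sqrt{pq}=\sqrt{e}$, consistent with Lemma~\ref{lem_sqrte}). There is no real obstacle here — Corollary~\ref{corphipq} is a restatement of a special case of the already-proved Theorem~\ref{thm3}.
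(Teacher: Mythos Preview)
Your proposal is correct and matches the paper's approach exactly: the paper simply records the identities $X_{p,q}=2e-(pd_p+qd'_q-d_pd'_q)$ and $Y_{p,q}=(e-pd_p)(e-qd'_q)$ (your equations~\eqref{eqXpqab} and~\eqref{eqYpqab}) and deduces the corollary from Theorem~\ref{thm3} with $(s,t)=(p,q)$. Your extra step of explicitly verifying the discriminant identity $X_{p,q}^2-4Y_{p,q}=(pd_p+qd'_q-d_pd'_q)^2-4d_pd'_q(pq-e)$ is routine and correct, and the paper leaves it implicit.
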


By adding an isolated vertex if necessary, we might assume $d_p=0$
and find $\phi_{p, q}=\sqrt{e}$ from Corollary~\ref{corphipq}.
Hence Theorem~\ref{thm3} provides another proof of
$\rho(G) \leq \sqrt{e}$ in Lemma~\ref{lem_sqrte}.

\section{Proof of Conjecture~\ref{conj2}}   \label{s_conj}

When $e, p, q$ are fixed, the formula \begin{equation}\label{phipq}
\phi_{p, q}(d_p, d'_q) =\sqrt{\frac{2e-(pd_{p}+qd'_{q}-d_{p}d'_{q})+\sqrt{(pd_{p}+qd'_{q}-d_{p}d'_{q})^{2}-4d_{p}d'_{q}(pq-e)}}{2}}
\end{equation}
obtained in Corollary~\ref{corphipq} is a $2$-variable function.
The following lemma
will provide shape of the function $\phi_{p, q}(d_p, d'_q).$
\medskip

\begin{lem}     \label{lem_partial}
If $1 \leq d'_{q} \leq p-1$ and $qd'_q\leq e$ 
then
\begin{equation}
\frac{\partial \phi_{p,q}(d_{p},d'_{q})}{\partial d_{p}} < 0.
\nonumber
\end{equation}
\end{lem}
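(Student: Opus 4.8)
The plan is to write $\psi:=\phi_{p,q}(d_p,d'_q)^2$ and differentiate implicitly. By \eqref{phipq} (equivalently Corollary~\ref{corphipq}), $\psi$ is the larger root of $g(\psi):=\psi^{2}-X_{p,q}\psi+Y_{p,q}=0$, with $X_{p,q}$, $Y_{p,q}$ as in \eqref{eqXpqab}, \eqref{eqYpqab}; since $\psi>0$ it suffices to prove $\partial\psi/\partial d_p<0$. Differentiating $g(\psi)=0$ in $d_p$ gives $(2\psi-X_{p,q})\,\partial_{d_p}\psi=(\partial_{d_p}X_{p,q})\psi-\partial_{d_p}Y_{p,q}$, and $2\psi-X_{p,q}=\sqrt{X_{p,q}^{2}-4Y_{p,q}}$, which a short computation using $e-pd_p\ge 0$, $e-qd'_q\ge 0$ and $d'_q\ge 1$ shows is strictly positive. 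Since $\partial_{d_p}X_{p,q}=d'_q-p$ and $\partial_{d_p}Y_{p,q}=-p(e-qd'_q)$ by \eqref{eqXpqab}, \eqref{eqYpqab}, the whole statement reduces to the single inequality
\begin{equation}
(p-d'_q)\,\psi>p\,(e-qd'_q).\tag{$\star$}
\end{equation}

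To prove $(\star)$ I would set $m:=e-pd_p\ge 0$ (the bound holds as $pd_p\le\sum_i d_i=e$) and $n:=e-qd'_q\ge 0$. Substituting $X_{p,q}=m+n+d_pd'_q$ and $Y_{p,q}=mn$ into $g(\psi)=0$ produces the factored identity
\begin{equation}
(\psi-m)(\psi-n)=d_pd'_q\,\psi,\nonumber
\end{equation}
which I then read off against the sandwich $m\le\psi\le e$. The lower bound $\psi\ge m$ is Lemma~\ref{rem2}(iii) for $(s,t)=(p,q)$, since $\sum_{i=1}^{p-1}(d_i-d_p)=m$; the upper bound holds because $g(e)=d_pd'_q(pq-e)\ge 0$ while the vertex $X_{p,q}/2=e-\tfrac12\bigl(d_p(p-d'_q)+qd'_q\bigr)<e$. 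When $d_p\ge 1$ both inequalities are strict (the identity forces $\psi>m$ as $d_pd'_q\psi>0$, and $g(e)>0$ forces $\psi<e$), so dividing the identity by $\psi-m>0$ and using $pd_p+m=e$ gives
\begin{equation}
(p-d'_q)\psi-p\,n=p(\psi-n)-d'_q\psi=\frac{d'_q\,\psi\,(e-\psi)}{\psi-m}>0,\nonumber
\end{equation}
which is exactly $(\star)$. In the remaining boundary case $d_p=0$ one has $m=e$, hence $\psi=e$, and $(\star)$ collapses to $d'_q(pq-e)>0$, again valid.

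The one point that needs care is the strict bound $\psi<e$, i.e.\ $g(e)>0$: it rests on $e<pq$, which I would invoke from the ambient setting — every $G\in\mathcal{K}(p,q,e)$ is a subgraph of $K_{p,q}$, and in fact $d'_q\le p-1$ already forces $e\le(q-1)p+d'_q<pq$ — since at $e=pq$ the derivative would only be $\le 0$. Everything else (the reduction to $(\star)$, the positivity of $2\psi-X_{p,q}$, the two degenerate cases) is routine bookkeeping; the heart of the argument is the identity $(\psi-m)(\psi-n)=d_pd'_q\psi$ combined with $m\le\psi\le e$, which converts $(\star)$ into the transparent statement $e-\psi>0$.
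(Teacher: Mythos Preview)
Your proof is correct and takes a genuinely different route from the paper's. The paper differentiates the closed formula \eqref{phipq} directly, obtaining
\[
-p+d'_q+\frac{(pd_p+qd'_q-d_pd'_q)(p-d'_q)-2d'_q(pq-e)}{\sqrt{(pd_p+qd'_q-d_pd'_q)^2-4d_pd'_q(pq-e)}},
\]
and then, in the nontrivial case, compares the fraction with $p-d'_q$ via the algebraic identity
\[
\Bigl((pd_p+qd'_q-d_pd'_q)-\tfrac{2d'_q(pq-e)}{p-d'_q}\Bigr)^2-\bigl((pd_p+qd'_q-d_pd'_q)^2-4d_pd'_q(pq-e)\bigr)=\tfrac{4d'^{\,2}_q(pq-e)}{(p-d'_q)^2}(qd'_q-e).
\]
You instead differentiate the defining quadratic $\psi^2-X_{p,q}\psi+Y_{p,q}=0$ implicitly, reduce everything to the single inequality $(\star)$, and then observe that the quadratic factors as $(\psi-m)(\psi-n)=d_pd'_q\psi$; combined with the sandwich $m<\psi<e$ this turns $(\star)$ into the transparent statement $e-\psi>0$. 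Your argument is more structural and avoids the somewhat opaque squared-difference identity; it also makes explicit the role of $e<pq$ (which the paper uses tacitly when asserting the displayed quantity is strictly negative). The paper's computation, on the other hand, stays closer to the explicit formula and needs no appeal to Lemma~\ref{rem2}(iii).
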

\begin{proof} Referring to  (\ref{phipq}),
it suffices to show that
\begin{align}\label{partial}
 &\frac{2e-(pd_p+qd'_q-d_pd'_q)+ \sqrt{(pd_p+qd'_q-d_pd'_q)^2-4d_pd'_q(pq-e)}}{\partial d_{p}} \nonumber \\
=&-p+d'_{q}+\frac{(pd_{p}+qd'_{q}-d_{p}d'_{q})(p-d'_{q})-2d'_{q}(pq-e)}
{\sqrt{(pd_{p}+qd'_{q}-d_{p}d'_{q})^2-4d_{p}d'_{q}(pq-e)}}
\end{align}
is negative.
If $qd'_{q}=e$ then (\ref{partial}) has negative value $2(d'_{q}-p).$
Indeed if the numerator of the fraction in  (\ref{partial}) is not positive
then (\ref{partial}) has negative value.
Thus assume  that it is positive and  $qd'_{q} < e.$
From simple computation to have the fact that
\begin{align*}
&~~\left( (pd_{p}+qd'_{q}-d_{p}d'_{q})-2d'_{q}\cdot\frac{pq-e}{p-d'_{q}} \right)^{2}
- \left( (pd_{p}+qd'_{q}-d_{p}d'_{q})^2-4d_{p}d'_{q}(pq-e) \right)
  \\
=&~~ \frac{4d'^{2}_{q}(pq-e)}{(p-d'_{q})^{2}} \cdot (qd'_{q}-e) < 0,
\end{align*}
we find that the fraction in (\ref{partial}) is strictly less than $p-d'_q$, so
the value in (\ref{partial}) is negative.
\end{proof}

\medskip

\begin{rem}   \label{rem_almostKpq}
From Example~\ref{exam_almostKpq}, if $p\leq q$ then the graphs $K_{p,q}^{[e]}=K_{p-1, q-pq+e}+N_{1, pq-e}$ and $K_{p,q}^{\{e\}}=K_{p-pq+e, q-1}+N_{pq-e, 1}$ satisfy the equalities in Theorem~\ref{thm3}. Hence $\rho(K_{p,q}^{[e]})=\phi_{p,q}(q-pq+e, p-1)$ and $\rho(K_{p,q}^{\{e\}}) =\phi_{p,q}(q-1, p-pq+e),$
the latter expanded as
\begin{equation}   \label{pqe}
\rho(K_{p,q}^{\{e\}})=\sqrt{\frac{e+\sqrt{e^2-4(q-1)(p-pq+e)(pq-e)}}{2}}
\end{equation}
by (\ref{phipq}).
\end{rem}

\medskip

\begin{lem}     \label{lem_conj2}
Suppose $0 < pq-e < \min(p,q)$, $1\leq d_p\leq q-1$, $1\leq d'_q\leq p-1$ and
\begin{equation}
d_{p}+d'_{q}=e-(p-1)(q-1).
\label{deg_relation}
\end{equation}
Then
\begin{equation}
\phi_{p,q}(d_{p},d'_{q}) \leq \rho(K_{p,q}^{\{e\}}).
\nonumber
\end{equation}
\end{lem}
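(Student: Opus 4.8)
The plan is to reduce the two-variable optimization to a one-variable problem along the line $d_p + d'_q = c$ where $c = e-(p-1)(q-1)$, and then exploit the monotonicity furnished by Lemma~\ref{lem_partial}. First I would observe that the constraint $0 < pq-e < \min(p,q)$ together with $1\le d_p\le q-1$ and $1\le d'_q\le p-1$ forces $c = e-(p-1)(q-1) = p+q-1-(pq-e)$, which lies strictly between $\max(p,q)-1$ and $p+q-2$; in particular $c\ge 1$, so integer solutions of \eqref{deg_relation} in the prescribed ranges exist, and the endpoint $(d_p,d'_q)=(q-1,\,c-(q-1)) = (q-1,\,p-pq+e)$ is exactly the degree pair of $K_{p,q}^{\{e\}}$ recorded in Remark~\ref{rem_almostKpq}. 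So the goal becomes: among all admissible pairs on this line segment, $\phi_{p,q}$ is maximized at the endpoint with $d_p=q-1$ (equivalently $d'_q$ as small as possible).

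Next I would set $d'_q = c - d_p$ and regard $\psi(d_p):=\phi_{p,q}(d_p,\,c-d_p)$ as a function of the single variable $d_p$ on the relevant interval. The derivative is $\psi'(d_p) = \partial_{d_p}\phi_{p,q} - \partial_{d'_q}\phi_{p,q}$, evaluated at $(d_p, c-d_p)$. By Lemma~\ref{lem_partial}, $\partial_{d_p}\phi_{p,q}<0$ whenever $1\le d'_q\le p-1$ and $qd'_q\le e$; by the symmetry of the formula \eqref{phipq} under $(p,d_p)\leftrightarrow(q,d'_q)$, the analogous statement gives $\partial_{d'_q}\phi_{p,q}<0$ whenever $1\le d_p\le q-1$ and $pd_p\le e$. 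Hence, provided the side conditions $qd'_q\le e$ and $pd_p\le e$ both hold throughout the segment, the sign of $\psi'$ is not immediately decided — I have a negative minus a negative. The right move is therefore not to differentiate along the line but to increase $d_p$ one unit at a time: I would show $\phi_{p,q}(d_p+1, d'_q-1) \ge \phi_{p,q}(d_p,d'_q)$ by writing it as $[\phi_{p,q}(d_p+1,d'_q-1)-\phi_{p,q}(d_p,d'_q-1)] + [\phi_{p,q}(d_p,d'_q-1)-\phi_{p,q}(d_p,d'_q)]$; the second bracket is positive (decreasing $d'_q$ by Lemma~\ref{lem_partial}'s mirror image), but the first bracket is negative, so this additive split still does not close. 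The clean alternative, and the one I expect to use, is to compare the \emph{radicands} directly: $\phi_{p,q}^2 = \tfrac12\big(2e-S+\sqrt{S^2-4P}\big)$ where $S = pd_p+qd'_q-d_pd'_q$ and $P = d_pd'_q(pq-e)$; since $x\mapsto \tfrac12(2e-S+\sqrt{S^2-4P})$ is decreasing in $S$ for fixed $P$ and decreasing in $P$ for fixed $S$, I would track how $S$ and $P$ change as $(d_p,d'_q)$ moves along $d_p+d'_q=c$ toward $(q-1,\,p-pq+e)$, and verify that the net effect is a decrease of $\phi_{p,q}^2$ at every step, using $c\le p+q-2$ and the explicit form $P = d_pd'_q(pq-e)$, which as a function of $d_p$ on $d_p+d'_q=c$ is a downward parabola symmetric about $d_p=c/2$.

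Concretely, along the line one computes $S = (p-d'_q)d_p + qd'_q = (p-c+d_p)d_p + q(c-d_p)$, a quadratic in $d_p$ with positive leading coefficient, and $P = (pq-e)d_p(c-d_p)$, a quadratic with negative leading coefficient. I would pin down the vertices of these parabolas and check that, on the admissible range of $d_p$ — whose upper end is $d_p=q-1$ — both ``$S$ increasing'' and ``$P$ decreasing'' hold as $d_p\to q-1$, or more precisely that the combination $2e-S+\sqrt{S^2-4P}$ is decreasing there; the endpoint value is then $\phi_{p,q}(q-1,p-pq+e) = \rho(K_{p,q}^{\{e\}})$ by \eqref{pqe}. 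The main obstacle I anticipate is precisely this last monotonicity check: because $S$ and $P$ move in the ``helpful'' directions only on part of the parameter range and the square root couples them, one has to be careful near $d_p=c/2$, and the inequalities $\max(p,q)-1 \le c \le p+q-2$ and $pq-e\ge 1$ must be used in just the right places — this is where a short but genuine computation (of the type already appearing in the proof of Lemma~\ref{lem_partial}, squaring a shifted expression and showing the difference has a definite sign) will be needed. Once that sign is settled, the lemma follows immediately from \eqref{pqe} and Remark~\ref{rem_almostKpq}.
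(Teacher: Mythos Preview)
Your proposal correctly identifies the target --- show that along the constraint line $d_p+d'_q=c$ the value $\phi_{p,q}(d_p,d'_q)$ does not exceed its value at the endpoint $(q-1,\,p-pq+e)$, which equals $\rho(K_{p,q}^{\{e\}})$ by Remark~\ref{rem_almostKpq} --- but it stops precisely at the hard step. You try differentiation, then a discrete telescoping split, acknowledge that neither closes, and finally propose to track $S=pd_p+qd'_q-d_pd'_q$ and $P=d_pd'_q(pq-e)$ along the line and settle the sign of $2e-S+\sqrt{S^2-4P}$ by ``a short but genuine computation'' that you never carry out. You even note the obstacle yourself: $S$ and $P$ move in the helpful direction only on part of the interval (their vertices sit at $(c+q-p)/2$ and $c/2$ respectively), so monotonicity is not obvious. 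There is also a sign slip: since the endpoint is supposed to be the \emph{maximum}, you need $\phi_{p,q}^2$ to \emph{increase} toward $d_p=q-1$, not ``decrease at every step'' as written. As it stands, the proposal is a plan whose central inequality is unverified.

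The paper's argument is different and bypasses the monotonicity question entirely. Assuming $p\le q$, two facts fall out of the constraint \eqref{deg_relation} directly: the factorisation
\[
e-S=(p-d'_q-1)(q-d_p-1)\ \ge\ 0,
\]
and the product bound $d_pd'_q\ge(q-1)(p-pq+e)$, which is simply the observation that on the segment $d_p+d'_q=c$ with $p\le q$ the product $d_pd'_q$ is smallest at the extreme point $d_p=q-1$. The product bound lets one replace $4P$ inside the radical by the constant $b=4(q-1)(p-pq+e)(pq-e)$, only enlarging the expression; the factorisation writes $S=e-a$ with $a\ge 0$. The whole lemma then reduces to
\[
a+\sqrt{(e-a)^2-b}\ \le\ \sqrt{e^2-b},
\]
which is a one-line consequence of the concavity of $\sqrt{x}$. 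The idea missing from your approach is exactly this pair of \emph{static} inequalities: rather than following $\phi_{p,q}$ dynamically along the line, one compares every point on it to the endpoint in a single stroke.
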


\begin{proof} From symmetry, we can assume $p\leq q.$
Referring to (\ref{phipq})  and (\ref{pqe}), we only need to show that
\begin{align}
&e-(pd_{p}+qd'_{q}-d_{p}d'_{q})+ \sqrt{(pd_{p}+qd'_{q}-d_{p}d'_{q})^{2}-4d_pd'_q(pq-e)} \label{1st}\\ \leq &
\sqrt{e^2-4(q-1)(p-pq+e)(pq-e)}. \label{last}
\end{align}

From \eqref{deg_relation}, we have
\begin{equation}
e - (pd_{p}+qd'_{q}-d_{p}d'_{q}) = (p-d'_{q}-1)(q-d_{p}-1) \geq 0
\label{ineq_1}
\end{equation}
and
\begin{align}
d_{p}d'_{q}= &\frac{(d_p+d'_q)^2-(d_p-d'_q)^2}{4}\nonumber \\
       \geq  &\frac{(e-(p-1)(q-1))^2-(q-1-(e-(p-1)(q-1)-(q-1)))^2}{4}\nonumber \\
        =    &(q-1)(p-pq+e).
\label{ineq_2}
\end{align}
Hence the equation (\ref{1st}) is at most
\begin{equation}\label{2nd}
e-(pd_{p}+qd'_{q}-d_{p}d'_{q})+\sqrt{(pd_{p}+qd'_{q}-d_{p}d'_{q})^{2}-4(q-1)(p-pq+e)(pq-e)}.
\end{equation}
By setting
$a = e-(pd_{p}+qd'_{q}-d_{p}d'_{q})$ and $b=4(q-1)(p-pq+e)(pq-e)$
and using the fact that \begin{equation}
\sqrt{e^2-b}-\sqrt{(e-a)^2-b} \geq \sqrt{e^2}-\sqrt{(e-a)^2} = a
\label{ineq_concave}
\end{equation}
from the concave property of the function $y=\sqrt{x}$,  we find the value in (\ref{2nd}) is at most
that in (\ref{last})
and the result follows.
\end{proof}

\medskip

\noindent{\bf The proof of Conjecture~\ref{conj2}}
\begin{proof}
By Theorem~\ref{thm3}, $\rho(G) \leq \phi_{p,q}(d_{p},d'_{q}).$
Note that the assumption $0<pq-e<\min(p,q)$ implies $1 \leq d_{p} \leq q-1$ and $1 \leq d'_{q} \leq p-1.$
Let $e_{p}=e-(p-1)(q-1)-d'_{q}.$
Clearly that $1 \leq e_{p} \leq d_{p}.$
By  Lemma~\ref{lem_partial}, $\phi_{p,q}(d_{p},d'_{q}) \leq \phi_{p,q}(e_{p},d'_{q}).$
With $e_p$ in the role of $d_p$ in Lemma~\ref{lem_conj2},
we have
$\phi_{p,q}(e_{p},d'_{q})\leq \rho(K_{p, q}^{\{e\}}).$ This completes the proof.
\end{proof}

\section{Concluding Remark}\label{concluding}

We give a series of sharp upper bounds for the spectral radius of a bipartite graph in Theorem~\ref{thm3}.
 One of these upper bounds can be presented only by five variables: the number $e$ of edges,   bipartition orders $p$ and $q$, and the minimal degrees $d_p$ and $d'_q$ in the corresponding partite sets as shown in Corollary~\ref{corphipq}.
We apply this bound when three variables $e, p, q$ are fixed  to prove Conjecture~\ref{conj2}, a refinement of Conjecture~\ref{conj1} in the assumption that $pq-e < \min(p, q).$
To conclude the paper we propose the following general refinement of Conjecture~\ref{conj1}.

\begin{conj}   \label{conj_concluding}
Let $G \in \mathcal{K}(p,q,e).$ Then
\begin{equation}
\rho(G) \leq \rho(K_{s, t}^{\{e\}})
\nonumber
\end{equation}
for some positive integers $s \leq p$ and $t \leq q$
such that $0 \leq st-e \leq \min(s,t).$
\end{conj}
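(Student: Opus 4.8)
\medskip

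It is convenient to rephrase the statement as $\max_{G\in\mathcal{K}(p,q,e)}\rho(G)=\max\{\rho(K_{s,t}^{\{e\}})\mid s\le p,\ t\le q,\ 0\le st-e\le\min(s,t)\};$ the inequality $\ge$ is immediate, since each such $K_{s,t}^{\{e\}}$ has $e$ edges and $s+t\le p+q$ vertices and hence embeds (after adjoining isolated vertices) as a member of $\mathcal{K}(p,q,e),$ so only $\le$ needs proof. Fix $G\in\mathcal{K}(p,q,e)$ of maximum spectral radius. One may assume $G$ is connected: $\rho(G)=\rho(C)$ for some component $C\subseteq K_{p'',q''}$ with $e''\le e$ edges, $p''\le p,$ $q''\le q;$ embedding $C$ in $K_{p,q}$ and adding the remaining $e-e''\le pq-e''$ edges produces a connected member of $\mathcal{K}(p,q,e)$ whose spectral radius is at least $\rho(G),$ because enlarging the edge set of a bipartite graph cannot decrease its spectral radius. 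Write $d_1\ge\cdots\ge d_p$ and $d'_1\ge\cdots\ge d'_q$ for the degree sequences; connectedness gives $d_p,d'_q\ge 1.$ By Theorem~\ref{thm3}, $\rho(G)\le\min_{s,t}\phi_{s,t},$ and equality in any one of these bounds forces $G=K_{s',t'}+H$ with $H$ biregular. Since the left side of the target inequality is a minimum over $(s,t)$ and the right side a maximum, it suffices to exhibit, from the structure of $G,$ a single pair $(s,t)$ with $\phi_{s,t}\le\rho(K_{s,t}^{\{e\}}).$

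The plan is to imitate the proof of Conjecture~\ref{conj2}, replacing the pair $(p,q)$ forced there by a $G$-dependent one. Let $t$ be one more than the number of vertices of the $q$-part of maximum degree $d'_1$ and $s$ one more than the number of vertices of the $p$-part of degree $d_1$ (if either count already equals $q$ or $p,$ then $G$ is biregular and Lemma~\ref{lem_d1d1} with Lemma~\ref{rem2}(i)(ii) identifies $G$ as a complete bipartite graph, hence as a $K_{s,t}^{\{e\}}$ with $st=e$). Theorem~\ref{thm3} gives $\rho(G)\le\phi_{s,t},$ and since $\phi_{s,t}$ depends only on $d_s,\,d'_t$ and the partial sums $\sum_{i<s}(d_i-d_s)$ and $\sum_{j<t}(d'_j-d'_t),$ I would run an edge-shifting argument with $e,p,q$ fixed that transfers edges so as to drive the $q$-part degree profile toward ``$t-1$ vertices of degree $s,$ one vertex of smaller degree, the rest isolated'' and the $p$-part profile toward ``$s-(st-e)$ vertices of degree $t,$ the other $st-e$ of degree $t-1$'', arranged so that $\phi_{s,t}$ never decreases. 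Its endpoint is the degree sequence of some $K_{s,t}^{\{e\}};$ for that sequence equality holds in Theorem~\ref{thm3} by the ``$K_{s',t'}+H$'' characterization, so $\phi_{s,t}=\rho(K_{s,t}^{\{e\}})$ and the chain $\rho(G)\le\phi_{s,t}\le\rho(K_{s,t}^{\{e\}})$ closes. Monotonicity at each step should come from a two-variable strengthening of Lemma~\ref{lem_partial}: differentiate $\phi_{s,t}^2,$ which satisfies $\phi^4-X_{s,t}\phi^2+Y_{s,t}=0$ by Lemma~\ref{rem2}(iv), along ``increase a top degree, decrease a bottom degree'' and control its sign under $0\le st-e\le\min(s,t).$ A cleaner variant I would also try is an induction on $pq-e$ with base case $pq-e<\min(p,q)$ (which is Conjecture~\ref{conj2}), reducing a graph that fails to saturate its frame to a problem on a smaller $K_{p',q'},$ and invoking the concavity estimate $\sqrt{e^2-b}-\sqrt{(e-a)^2-b}\ge a$ from Lemma~\ref{lem_conj2} to compare the resulting radicals.

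The main obstacle is precisely this optimization of $\phi_{s,t}$ over admissible degree sequences and frames. For Conjecture~\ref{conj2} the pair $(s,t)=(p,q)$ is forced and the feasible degrees form the one-dimensional segment $d_p+d'_q=e-(p-1)(q-1),$ on which Lemma~\ref{lem_partial} gives a clean monotonicity; here the frame size $(s,t)$ is itself a nontrivial function of $e,p,q$ and need not sit on the boundary of $\{1,\dots,p\}\times\{1,\dots,q\},$ so one must optimize jointly over $(s,t)$ and the degree sequence, and must rule out that some intermediate edge transfer lowers $\phi_{s,t}$ before it can be recovered. Controlling the sign of the multivariate derivative of $\phi_{s,t}$ outside the range $qd'_q\le e$ in which Lemma~\ref{lem_partial} operates, and then comparing the finitely many surviving candidates $\rho(K_{s,t}^{\{e\}})$ to select the maximizer, is where the real work lies — which is presumably why the statement appears here as a conjecture rather than a theorem.
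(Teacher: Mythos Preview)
The paper does not prove this statement: Conjecture~\ref{conj_concluding} is posed in the concluding section as an open problem, with only the heuristic remark that the function $\phi_{p,q}(d_p,d'_q)$ ``will still play an important role'' and that one should ``investigate the shape of the $4$-variable function'' and determine which $(s,t,d_s,d'_t)$ are realized by actual graphs. There is therefore no proof to compare against, and you correctly recognize this in your final sentence.

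What you have written is a strategy sketch, not a proof, and the gaps you flag are real. Two deserve emphasis. First, your choice of $(s,t)$ as ``one more than the number of maximum-degree vertices on each side'' carries no guarantee that $0\le st-e\le\min(s,t)$; for a graph with few top-degree vertices and many edges one can easily have $st<e$, so the target $K_{s,t}^{\{e\}}$ is not even defined, and the chain $\rho(G)\le\phi_{s,t}\le\rho(K_{s,t}^{\{e\}})$ collapses before it starts. You would need either a different rule for selecting $(s,t)$ or an argument that a maximizer $G$ forces your $(s,t)$ into the admissible range. Second, the edge-shifting monotonicity you invoke is exactly the step the paper could only carry out in the one-dimensional slice of Lemma~\ref{lem_partial} under the hypothesis $qd'_q\le e$; extending it to a two-parameter family, and simultaneously letting the frame $(s,t)$ vary, is the substance of the problem, not a routine calculation. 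Your alternative induction on $pq-e$ faces the same difficulty at the inductive step: passing to a smaller frame changes all four arguments of $\phi$ at once, and the concavity trick from Lemma~\ref{lem_conj2} controls only one of them. In short, your outline is consonant with the authors' own suggested line of attack, but it does not close the gap that leaves the statement a conjecture.
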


\medskip

We believe that the function $\phi_{p,q}(d_{p},d'_{q})$ in \eqref{phipq} will still play an important role in solving Conjecture~\ref{conj_concluding}.
Two of the key points might be to investigate the shape of the $4$-variable function $\phi_{p,q}(d_{p},d'_{q})$
with variables $p,q,d_{p},d'_{q}$, and to check that for which sequence $s$, $t$, $d_s$, $d'_t$
such that
$s \leq p$ and $t \leq q$ and $0 \leq st-e \leq \min(s,t),$
 there exists a
bipartite graph $H$ with $e$ edges whose spectral radius satisfying $\rho(H)=\phi_{s,t}(d_{s},d'_{t})$,
where  $s,t$ are the bipartition orders of $H$ and $d_s$ and $d'_t$ are corresponding minimum degrees.

\section*{Acknowledgments}
This research is supported by the National Science Council of Taiwan R.O.C.
under the project NSC 102-2115-M-009-005-MY3.

\end{document}